\numberwithin{equation}{section}
\numberwithin{table}{section}
\theoremstyle{plain}
\newtheorem{theorem}{Theorem}[section]
\newtheorem*{thm:A}{Theorem \ref{thm:A}}
\newtheorem*{thm:B}{Theorem \ref{thm:B}}
\newtheorem{lemma}{Lemma}[section]
\theoremstyle{remark}
\newcommand{\ZpL}[1]{\mathbb{Z}_{p}^{\mathcal{L}_{{#1}}}}
\newcommand{\LL}{{\mathcal{L}}}
\newcommand{\HH}{{\mathcal{H}}}
\newcommand{\Zp}{{\mathbb{Z}_{p}}}
\newcommand{\Qp}{{\mathbb{Q}_{p}}}
\newcommand{\Fq}{{\mathbb{F}_{q}}}
\newcommand{\leftsub}[2]{{\vphantom{#2}}_{#1}{#2}}
\newcommand{\qbinom}[2]{{\left[\begin{smallmatrix}#1\\#2\end{smallmatrix}\right]_q}}
\newcommand{\qqbinom}[2]{{\left[\begin{smallmatrix}#1\\#2\end{smallmatrix}\right]_{q^2}}}
\newcommand{\abs}[1]{\lvert#1\rvert}
\newcommand{\Qell}{{K}}
\newcommand{\Fell}{{{F}}}
\newcommand{\Rell}{{R}}
\newcommand{\allone}{{\mathbf{1}}}
\DeclareMathOperator{\Image}{Im}
\DeclareMathOperator{\Ker}{Ker}
\DeclareMathOperator{\soc}{soc}
\DeclareMathOperator{\GL}{{\mathrm{GL}}}
\DeclareMathOperator{\PG}{{\mathrm{PG}}}
\DeclareMathOperator{\End}{{\mathrm{End}}}
\title[Grassmann graphs]{The Smith group and the critical group of the Grassmann graph of lines
in finite projective space and of its complement}
\author[Ducey]{Joshua E. Ducey}
\author[Sin]{Peter Sin}
\address{Dept.\ of Mathematics and Statistics, James Madison University, Harrisonburg, VA 22807, USA}
\email{duceyje@jmu.edu}
\address{Dept.\ of Mathematics, University of Florida, P.O. Box 118105, Gainesville, FL 32611, USA}
\email{sin@ufl.edu}
\thanks{This work was partially supported by a grant from the Simons Foundation (\#204181 to Peter Sin). }
\thanks{Research was (partially) completed while the second author was visiting the Institute for Mathematical Sciences, National University of Singapore in 2016.}
\keywords{invariant factors, elementary divisors, Smith normal form, critical group, Jacobian group, sandpile group, adjacency matrix, Laplacian, Grassmann}
\subjclass[2010]{05C50}
\begin{document}

\begin{abstract}
We compute the elementary divisors of the adjacency and Laplacian matrices of the Grassmann graph on $2$-dimensional subspaces in a finite vector space.
We also compute the corresponding invariants of the complementary graphs.
\end{abstract}
\maketitle
\section{Introduction}
The {\it Smith group} of a graph is the abelian group defined by 
using an adjacency matrix of the graph as a relation matrix. The name
comes from the fact that a cyclic decomposition of this group is given by the Smith
normal form of the adjacency matrix. If we use the Laplacian matrix
of the graph as a relation matrix then the finite part of the abelian group
defined is called the the {\it critical group}. By the well-known
Matrix Tree Theorem of Kirchhoff, the order of the critical group of a connected graph
is equal to the number of spanning trees in the graph. This group also arises
in chip-firing games on graphs and in the closely-related sandpile model in physics, 
where it is also known as the sandpile group. It is therefore of some interest
to compute the  Smith groups and critical groups for individual graphs
or for families of graphs. A particular class that has proved amenable
to computations  is the class of strongly regular graphs (e.g. \cite{Paley}). 
In this paper we treat these questions for 
the Grassmann graph defined by the incidence of lines in a finite
projective space $\PG(n-1,q)$, as well as its complementary graph, the skew lines
graph.
As well as being strongly regular these graphs admit the action of the
general linear group of the projective space, which allows the application of 
extensive machinery from representation theory. The problem of computing
the Smith normal forms or, equivalently, the elementary divisors
of the adjacency and Laplacian matrices, falls naturally into two parts, 
one for the characteristic $p$ of the projective space and the second for
all other primes.  In \S\ref{genlemmas}, we establish some general lemmas
needed in both parts. 
The charactersistic $p$  case was handled in projective dimension 3
in an earlier paper \cite{skew}, by making  use of the computations
of the $p$-elementary divisors of point-subspace incidence matrices in
\cite{CSX}. Fortunately, it is possible to extend 
the method to higher dimensions without great difficulty, and this is 
done in \S\ref{charp}.
We then turn to the cross-characteristic case.
Here we rely heavily on James's theory \cite{James} 
of unipotent representations of $\GL(n,q)$,
over a field of characteristic $\ell\neq p$. From general considerations we know that
the multiplicity of a given power $\ell^a$ as an elementary divisor
is the dimension of some $\GL(n,q)$-subquotient of the $\ell$-modular permutation module on lines, and detailed information about the structure of the 
submodule lattice of this permutation module allows us to determine all 
such multiplicities.  In \S\ref{crosschar} we collect
together the required results from the $\ell$-modular representation theory
of $\GL(n,q)$. The  detailed computations are then made in the following four sections,
treating in turn the critical group and the Smith group of the Grassmann graph, 
followed by the critical group and the Smith group of the skew lines graph. 
Finally, we include an appendix
displaying all the possible submodule structures of the $\ell$-modular permutation module,
corresponding to various arithmetic conditions satisfied by $\ell$ and $n$.
Although we never use these informal pictures in our proofs,
we have found them very helpful in navigating through the many cases into which
the computations are divided.

\section{Parameters of the graphs}
Let $V$ be an $n$-dimensional vector space $(n \geq 4)$ over a finite field $\Fq$ of $q=p^{t}$ elements.  Consider the graph with vertex set the $2$-dimensional subspaces of $V$ (or $2$-spaces for short), where a pair of $2$-spaces are adjacent when their intersection is trivial.  This is the skew-lines graph of the associated projective space, which we denote by $\Gamma$.  Under some ordering of the vertices, we let $A$ denote the adjacency matrix. 

The complement of this graph is the Grassmann graph $\Gamma^{\prime}$ on $2$-spaces, and we denote by $A^{\prime}$ its adjacency matrix.

These graphs are strongly regular \cite[Chapter 8]{BH}.  The parameters $v^{\prime}, k^{\prime}, \lambda^{\prime}, \mu^{\prime}$ of $\Gamma^{\prime}$ are given below.  We use the $q$-binomial coefficients 
\[\qbinom{m}{k} = \frac{(q^{m} - 1)(q^{m-1}-1) \cdots (q^{m-k+1} - 1)}{(q^{k} - 1)(q^{k-1} - 1) \cdots (q-1)}
\]
 which count the number of $k$-dimensional subspaces in an $m$-dimensional vector space over $\Fq$.
\begin{align*}
v^{\prime} &= \qbinom{n}{2} \\
k^{\prime} &= q(q+1)\qbinom{n-2}{1} \\
\lambda^{\prime} &= \qbinom{n-1}{1} + q^{2} - 2\\
\mu^{\prime} &= (q+1)^{2}.
\end{align*}

Let $v, k, \lambda, \mu$ denote the parameters of the skew-lines graph $\Gamma$.  Since this graph is complementary to $\Gamma^{\prime}$ we have
\begin{align*}
v &= v^{\prime} \\
k &= v - k^{\prime} - 1 = q^{4} \qbinom{n-2}{2}  \\
\lambda &= v - 2k^{\prime} + \mu^{\prime} - 2 = 2k + \mu - v\\
\mu &= v - 2k^{\prime} + \lambda^{\prime}.
\end{align*}
We also record that $A$ has spectrum 
\[
q^{4} \qbinom{n-2}{2},  \, -q^{2} \qbinom{n-3}{1}, \, q
\]
with respective multiplicities $1, \qbinom{n}{1}-1, \qbinom{n}{2}-\qbinom{n}{1}$.

Let $L=kI-A$ be the Laplacian matrix of $\Gamma$ and $L'=k'I-A'$ that
of $\Gamma^{\prime}$. We note that $A$ and $A'$ are invertible matrices,
while $L$ and $L'$ have corank 1.
We shall make use of relations satisfied by $A$, $L$, $A'$ and $L'$
that follow from the strong regularity of $\Gamma$ and $\Gamma'$. 

These are well known \cite[Theorem 8.1.2]{BH}.
\begin{lemma}\label{srgeq} Let $\tilde A$ be the adjacency matrix of a strongly
regular graph with parameters $(\tilde v,\tilde k,\tilde\lambda, \tilde\mu)$
and restricted eigenvalues $\tilde r$ and $\tilde s$, and let
$\tilde L={\tilde k}I-\tilde A$ be the Laplacian matrix. Then 
\begin{enumerate}
\item[(i)] ${\tilde A}^2+(\tilde\mu-\tilde\lambda)\tilde A+(\tilde\mu-\tilde k)I=\tilde\mu J$, where $J$ is the matrix all of whose entries are $1$.
\item[(ii)] $(\tilde A-\tilde rI)(\tilde A-\tilde sI)=\tilde\mu J$, and 
$(\tilde L-(\tilde k-\tilde r)I)(\tilde L-(\tilde k-\tilde s)I)=\tilde\mu J$. 
\end{enumerate}
\end{lemma}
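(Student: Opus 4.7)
The plan is to derive part (i) directly from the combinatorial definition of strong regularity, and then obtain part (ii) as a purely algebraic consequence via a factorization of the quadratic $x^2 + (\tilde\mu - \tilde\lambda)x + (\tilde\mu - \tilde k)$ together with the substitution $\tilde A = \tilde k I - \tilde L$ for the Laplacian form. Since the authors themselves call this well-known and only cite it, the proof will be short and the main work is really bookkeeping.

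For part (i), I would compute the $(i,j)$-entry of $\tilde A^2$ by interpreting it as the number of walks of length two from vertex $i$ to vertex $j$. The definition of a strongly regular graph with parameters $(\tilde v, \tilde k, \tilde\lambda, \tilde\mu)$ gives three cases: the diagonal entries equal $\tilde k$ (the common degree); entries corresponding to adjacent pairs equal $\tilde\lambda$; and entries for distinct non-adjacent pairs equal $\tilde\mu$. In matrix form this says
\[
\tilde A^2 = \tilde k I + \tilde\lambda \tilde A + \tilde\mu (J - I - \tilde A),
\]
since $J - I - \tilde A$ is the $(0,1)$-matrix indicating distinct non-adjacent pairs. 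Rearranging gives exactly the identity claimed in (i).

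For part (ii), recall that the restricted eigenvalues $\tilde r$ and $\tilde s$ are by definition the two roots of $x^2 + (\tilde\mu - \tilde\lambda)x + (\tilde\mu - \tilde k) = 0$; this is standard and can be seen by applying (i) to an eigenvector of $\tilde A$ orthogonal to the all-ones vector, on which $J$ acts as zero. By Vieta we have $\tilde r + \tilde s = \tilde\lambda - \tilde\mu$ and $\tilde r \tilde s = \tilde\mu - \tilde k$, so
\[
(\tilde A - \tilde r I)(\tilde A - \tilde s I) = \tilde A^2 + (\tilde\mu - \tilde\lambda)\tilde A + (\tilde\mu - \tilde k)I,
\]
which by (i) equals $\tilde\mu J$. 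Substituting $\tilde A = \tilde k I - \tilde L$ into this identity turns each factor $\tilde A - \tilde r I$ into $-(\tilde L - (\tilde k - \tilde r)I)$ and similarly for $\tilde s$; the two sign changes cancel and produce $(\tilde L - (\tilde k - \tilde r)I)(\tilde L - (\tilde k - \tilde s)I) = \tilde\mu J$, as claimed.

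I do not anticipate a real obstacle: the only subtlety is making sure the roles of $\tilde r$ and $\tilde s$ as the restricted eigenvalues are justified before invoking Vieta, and keeping signs straight in the substitution step for the Laplacian identity. Both are routine.
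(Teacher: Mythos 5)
Your proof is correct and is the standard argument; the paper itself gives no proof, simply citing \cite[Theorem 8.1.2]{BH} as well known, and the argument in that reference is essentially the one you give (count common neighbors to get (i), identify $\tilde r,\tilde s$ as the roots of the resulting quadratic on $\allone^{\perp}$, then factor and substitute $\tilde A=\tilde kI-\tilde L$). All steps, including the sign cancellation in the Laplacian substitution, check out.
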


\section{General results on modules}\label{genlemmas}

Let $(R,(\pi))$ be a local PID with residue field $R/(\pi)=F$, field of quotients $\Qell$, and let $M$ a
finitely generated free $R$-module.
Let $\overline{M}=M/\pi M \cong F\otimes_RM$,
and for  any submodule $U$ of $M$, we denote by $\overline U$ its image
in $\overline M$.
Let $\phi\in\End_R(M)$, and for $i\geq 0$ let 
\begin{equation}
M_{i} = M_i(\phi)=\{n\in M|\phi(n)\in \pi^iM\}.
\end{equation}
Then we have
\begin{equation}\label{Mchain}
M=M_0\supseteq M_1\supseteq\cdots\supseteq \Ker(\phi),
\end{equation}
and
\begin{equation}\label{Mbarchain}
\overline M=\overline M_0\supseteq \overline M_1\supseteq\cdots\supseteq \overline{\Ker(\phi)}.
\end{equation}
Then $e_i=e_i(\phi):=\dim_\Fell(\overline{M}_i/\overline{M}_{i+1})$ is the multiplicity
of $\pi^i$ as an elementary divisor of $\phi$. 
($e_0$ is the rank of $\overline\phi$.)
 We will often use the formula
\begin{equation}\label{Mbardim}
\dim_\Fell\overline{M}_i=\dim_\Fell\overline{\Ker(\phi)}+\sum_{j\geq i}e_j.
\end{equation}

Note that $\overline{\Ker(\phi)}$ is not, in general, equal to the kernel of the induced map $\overline\phi\in\End_\Fell(\overline{M})$. Indeed, $\Ker(\overline\phi)=\overline{M}_1$.

One can also define an ascending chain of submodules of $M$:
\[
N_{i} = N_{i}(\phi) = \{\pi^{-i}\phi(n) \vert n \in M_{i}\}.
\]
A formula similar to the above holds, namely,
\begin{equation}\label{Nbardim}
\dim_{\Fell}\overline{N}_{i} = \sum_{j=0}^{i}e_{j}.
\end{equation}

\begin{lemma}\label{eldiv} Let $M$, and $\phi$ be as above.
Let $d$ be the $\pi$-adic valuation of the product of the nonzero
elementary divisors of $\phi$, counted with multiplicities.
Suppose  that in (\ref{Mbarchain}) we have an increasing sequence of indices 
$0<a_1<a_2<\cdots<a_h$ and a corresponding sequence of lower bounds
$b_1>b_2>\cdots>b_h$ satisfying the following conditions. 
\begin{enumerate}
\item$\dim_F\overline M_{a_j}\geq b_j$ for $j=1$,\ldots, $h$. 
\item $\sum_{j=1}^{h}(b_j-b_{j+1})a_j=d$, where we set $b_{h+1}=\dim_F\overline{\Ker(\phi)}$. 
\end{enumerate}
Then the following hold.
\begin{enumerate}
\item $e_{a_j}(\phi)=b_j-b_{j+1}$ for $j=1$,\ldots, $h$. 
\item $e_0(\phi)=\dim_F\overline M-b_1$.
\item $e_i(\phi)=0$ for $i\notin\{0, a_1,\ldots, a_h\}$.
\end{enumerate}
\end{lemma}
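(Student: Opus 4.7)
The plan is to encode the hypotheses as a pointwise comparison between the non-increasing function $f(i) := \dim_F \overline{M}_i$ and a step function $g$ built from the data $(a_j, b_j)$, and then use an ``area'' identity to force $f = g$ everywhere and read off the multiplicities $e_i$.

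From (\ref{Mbardim}) we have $f(0) = \dim_F \overline{M}$, $f(i) = b_{h+1}$ for all sufficiently large $i$, and $e_i = f(i) - f(i+1)$. Swapping the order of summation in $\sum k\,e_k$ yields the identity
\[
d \;=\; \sum_{k\geq 1} k\, e_k \;=\; \sum_{i\geq 1}\sum_{k\geq i} e_k \;=\; \sum_{i\geq 1} \bigl(f(i) - b_{h+1}\bigr),
\]
a finite sum. Next I would define $g$ on the positive integers by $g(i) = b_j$ for $a_{j-1} < i \leq a_j$ (with $a_0 := 0$) and $g(i) = b_{h+1}$ for $i > a_h$. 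Monotonicity of $f$ combined with hypothesis (1) then gives $f(i) \geq g(i)$ for every $i \geq 1$: on the block $a_{j-1} < i \leq a_j$ one has $f(i) \geq f(a_j) \geq b_j$, and for $i > a_h$ the estimate $f(i) \geq b_{h+1}$ is trivial.

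A short Abel-style computation evaluates
\[
\sum_{i\geq 1}\bigl(g(i) - b_{h+1}\bigr) \;=\; \sum_{j=1}^h (a_j - a_{j-1})(b_j - b_{h+1}) \;=\; \sum_{j=1}^h (b_j - b_{j+1})\,a_j,
\]
which equals $d$ by hypothesis (2). Combined with the preceding displayed identity, this forces $\sum_{i \geq 1} (f(i) - g(i)) = 0$ with each summand nonnegative, so $f(i) = g(i)$ for every $i \geq 1$. The three conclusions now fall out by reading off $e_i = f(i) - f(i+1)$: the function $g$ is constant at $b_j$ on $(a_{j-1}, a_j]$ and drops to $b_{j+1}$ at $a_j + 1$, giving $e_{a_j} = b_j - b_{j+1}$ and $e_i = 0$ for $i \notin \{0, a_1,\ldots, a_h\}$, while $e_0 = \dim_F\overline{M} - f(1) = \dim_F\overline{M} - b_1$.

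There is no substantive obstacle here: the argument is the discrete version of the elementary principle that a nonnegative function which pointwise dominates a prescribed minorant but shares its total mass with that minorant must coincide with it. The only bookkeeping item requiring attention is the algebraic identity between the two forms of the sum $\sum (b_j - b_{j+1})a_j$, which is a one-line telescoping using $a_0 = 0$ and the convention $b_{h+1} = \dim_F \overline{\Ker(\phi)}$.
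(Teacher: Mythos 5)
Your proof is correct and is essentially the paper's own squeeze argument, repackaged: the paper runs the chain $d = \sum_i i e_i \geq \cdots \geq \sum_j (b_j - b_{j+1})a_j = d$ directly and then reads off equalities, while you convert the same Abel-summation bookkeeping into the cleaner statement that the nonincreasing profile $f(i)=\dim_F\overline M_i$ dominates the step function $g$ pointwise and has the same total excess mass over $b_{h+1}$, forcing $f=g$. The reformulation is a nice way to organize the computation, but the underlying mechanism (summation by parts plus the two hypotheses forcing equality everywhere) is identical.
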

\begin{proof}
We have
\begin{equation}
\begin{aligned}
d&=\sum_{i\geq0}ie_i\\
&\geq\sum_{j=1}^{h-1}\left(\sum_{a_j\leq i<a_{j+1}}ie_i\right)+\sum_{i\geq a_h}ie_i\\
&\geq\sum_{j=1}^{h-1}\left(a_j\sum_{a_j\leq i<a_{j+1}}e_i\right)+a_h\sum_{i\geq a_h}e_i\\
&=\sum_{j=1}^{h-1}a_j(\dim_F\overline M_{a_j}-\dim_F\overline M_{a_{j+1}})+a_h(\dim_F\overline M_{a_h}-\dim_F\overline{\Ker(\phi)})\\
&=a_1\dim_F\overline M_{a_1}+\sum_{j=2}^{h}(a_j-a_{j-1})\dim_F\overline M_{a_j}-a_h\dim_F\overline{\Ker(\phi)}\\
&\geq a_1b_1+\sum_{j=2}^{h}(a_j-a_{j-1})b_j-a_hb_{h+1}\\
&=\sum_{j=1}^{h}(b_j-b_{j+1})a_j=d.
\end{aligned}
\end{equation}
Therefore we must have equality throughout. The equality of the first three
lines implies that $e_i=0$ for $i\notin\{0,a_1,\ldots,a_h\}$, proving (3).
It follows that for $j=1$,\ldots, $h-1$ we have $e_{a_j}=\dim_F\overline M_{a_j}-\dim_F\overline M_{a_{j+1}}$, and $e_{a_h}=\dim_F\overline M_{a_h}-\dim_F\overline{\Ker(\phi)}$.
The equality of the second and third last rows then shows that 
inequalities $\dim_F\overline M_{a_j}\geq b_j$ are all equalities, for $j=1$,\ldots, $h$,
so $e_{a_j}=(b_j-b_{j+1})$ and (1) is proved.
Finally, $e_0(\phi)$ is the rank of the induced endomorphism
$\overline\phi\in\End_F(\overline M)$, whose kernel is $\overline M_1$ by definition, and our discussion shows that $\overline M_1=\overline M_{a_1}$, proving(2).
\end{proof}

Let $M_{\Qell}=\Qell\otimes_\Rell M$.  The following lemma is well known (cf. \cite[Proposition 12.8.3]{BH}).
\begin{lemma}\label{eigenspacebound} Let $\phi\in\End_\Rell(M)$.
Suppose the induced endomorphism $\phi_{\Qell}$ 
of $M_{\Qell}$ has an eigenvalue $u\in \Rell$. Let $d$ be the dimension of the eigenspace
\begin{equation*}
(M_{\Qell})_u=\{m\in M_{\Qell} \mid \phi_{\Qell}(m)=um \}.
\end{equation*}
Then $d\leq \dim_\Fell\overline{M}_b$, for any $b$ with $\pi^b\mid u$.
\end{lemma}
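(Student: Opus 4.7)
The plan is to exhibit an $F$-subspace of $\overline{M}_b$ of dimension exactly $d$. The natural candidate comes from the integral part of the eigenspace, namely
\[
N := (M_K)_u \cap M,
\]
where we view $M$ as a subset of $M_K$ via $m\mapsto 1\otimes m$ (this is an inclusion because $M$ is free, hence torsion-free over $R$).

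First I would check that $N$ captures the full dimension of the eigenspace, i.e., that $K\otimes_R N = (M_K)_u$. Given $v\in (M_K)_u$, choose $k\geq 0$ with $\pi^k v\in M$; since $(M_K)_u$ is a $K$-subspace, $\pi^k v$ still lies in $(M_K)_u$, hence in $N$, so $v\in K\otimes_R N$. Thus $N$ is a finitely generated torsion-free $R$-module of rank $d$, and since $R$ is a PID it is free of rank $d$.

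Next, I would verify that $N$ is a \emph{pure} submodule of $M$, meaning $\pi M\cap N = \pi N$. If $x = \pi y$ for some $y\in M$ and $x\in N$, then $\pi y \in (M_K)_u$, so $y\in (M_K)_u$ by $K$-linearity of the eigenspace, hence $y\in N$ and $x\in\pi N$. Purity ensures that the natural map $N/\pi N \to \overline{M}$ is injective, so the image $\overline{N}\subseteq\overline{M}$ has $F$-dimension equal to the rank of $N$, namely $d$.

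Finally I would show $N\subseteq M_b$. Write $u = \pi^b u'$ with $u'\in R$. For any $n\in N$,
\[
\phi(n) = u n = \pi^b u' n \in \pi^b M,
\]
so by the definition of $M_b$ we have $n\in M_b$. Therefore $\overline{N}\subseteq\overline{M}_b$, and
\[
d = \dim_F \overline{N} \leq \dim_F \overline{M}_b.
\]

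The routine obstacle is the purity/dimension count in the third step, which is standard for lattices inside $K$-subspaces; none of the steps present a real conceptual difficulty once the right submodule $N$ is isolated. The whole point is simply that the eigenspace, once saturated against $M$, is a pure sublattice on which $\phi$ acts as multiplication by a scalar divisible by $\pi^b$.
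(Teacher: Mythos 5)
Your proof is correct and follows essentially the same route as the paper: form $N = M\cap (M_{\Qell})_u$, observe it is a pure sublattice of rank $d$ contained in $M_b$, and conclude by passing to $\overline{M}$. The paper simply cites Curtis--Reiner for the purity claim where you verify it directly.
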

\begin{proof}
The submodule $M\cap (M_{\Qell})_u$ is pure (\cite[p.84]{Curtis-Reiner}), has rank
equal to $d$ and  clearly lies in $M_b$.
It follows that $d=\dim_{\Fell}\overline{M\cap (M_{\Qell})_u}\leq\dim_\Fell\overline{M}_b$.
\end{proof}

Assume that $M$ is an $\Rell G$-module for a group $G$ and 
$\phi$ an $\Rell G$-module homomorphism. Then we
can strengthen the conclusion of Lemma~\ref{eigenspacebound} to a statement
about the $\Fell G$-composition factors of $\overline{M_b}$.

\begin{lemma}\label{decompmap} With the notation above and Lemma~\ref{eigenspacebound},
let $E$ be a $\Qell G$-submodule of $(M_{\Qell})_u$. 
Then the following hold.
\begin{enumerate}
\item[(i)]
$M\cap E$ is a pure submodule of $M$ such that $\Qell\otimes_\Rell(M\cap E)\cong E$.
\item[(ii)] If $X$ be any $\Rell$-free $\Rell G$-module with $\Qell\otimes_\Rell X\cong E$, then $X/\pi X$ and $\overline{M\cap E}$ have the same $\Fell G$-composition factors, counting multiplicities.
\item[(iii)] For any $b$ with $\pi^b\mid u$, the $\Fell G$-composition factors
of $\overline{M_b}$ include those of $X/\pi X$, counting multiplicities.
\end{enumerate}
\end{lemma}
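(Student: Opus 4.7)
The plan is to establish the three parts in sequence, with (i) giving us the relevant lattice to which we can apply the lattice-independence of decomposition numbers in (ii), and then deducing (iii) by locating this lattice inside $M_b$.

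For part (i), I would verify purity directly from the definition: if $\pi^k m\in M\cap E$ for some $m\in M$, then $\pi^k m\in E$, and since $E$ is a $\Qell$-subspace of $M_\Qell$ we get $m\in E$, hence $m\in M\cap E$. So $M/(M\cap E)$ is $\pi$-torsion-free, and since $R$ is a local PID this is equivalent to purity. To see $\Qell\otimes_R(M\cap E)\cong E$, note that the natural map is injective by flatness and its image lies in $E$. For surjectivity, any $e\in E\subseteq M_\Qell$ can be written as $\pi^{-k}m$ for $m\in M$ and some $k\geq 0$; then $m=\pi^k e\in E$, so $m\in M\cap E$, and $e$ lies in the image. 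The $RG$-module structure on $M\cap E$ is inherited from that on $M$, since $E$ is $G$-stable.

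For part (ii), I would invoke the standard lattice-independence result from modular representation theory (essentially the Brauer--Nesbitt theorem, cf.\ \cite[\S16]{Curtis-Reiner}): if $X$ and $Y$ are two $R$-free $RG$-lattices with $\Qell\otimes_R X\cong \Qell\otimes_R Y$ as $\Qell G$-modules, then $X/\pi X$ and $Y/\pi Y$ have the same $FG$-composition factors with multiplicities. By part (i), $M\cap E$ is such a lattice with $\Qell\otimes_R(M\cap E)\cong E\cong \Qell\otimes_R X$, so this applies directly.

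For part (iii), the first task is to show $M\cap E\subseteq M_b$. If $m\in M\cap E$, then $\phi(m)=um$, and $\pi^b\mid u$ in $R$ together with $m\in M$ gives $\phi(m)\in\pi^b M$, i.e., $m\in M_b$. Next, since $M\cap E$ is pure in $M$ (by (i)), the map $\overline{M\cap E}\to\overline{M}$ is injective, and it factors through $\overline{M_b}$, so $\overline{M\cap E}\hookrightarrow\overline{M_b}$ as an $FG$-submodule. Combining this injection with (ii) gives the claim that the composition factors of $X/\pi X$ appear among those of $\overline{M_b}$ with at least the same multiplicities.

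The main technical point to be careful about is the appeal in (ii) to the invariance of the reduction mod $\pi$ under change of lattice; this is the content that does real work, while parts (i) and (iii) are then essentially bookkeeping with purity and the eigenvalue condition. A secondary subtlety is verifying that the inclusion $\overline{M\cap E}\hookrightarrow\overline{M_b}$ is really injective --- this is where purity of $M\cap E$ in the ambient $M$ (rather than in $M_b$, which may not itself be pure) is crucial.
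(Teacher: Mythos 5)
Your proof is correct and follows essentially the same route as the paper's: part (i) by checking purity directly from the definition and identifying $\Qell\otimes_R(M\cap E)$ with $E$; part (ii) by invoking the lattice-independence of composition factors upon reduction modulo $\pi$ (the paper cites \cite[Proposition~16.16]{Curtis-Reiner}, which is the same Brauer--Nesbitt principle you name); and part (iii) by showing $M\cap E\subseteq M_b$ via the eigenvalue relation $\phi(m)=um$ with $\pi^b\mid u$ and then using purity to get an injection $\overline{M\cap E}\hookrightarrow\overline{M_b}$. You supply more explicit detail than the paper's terse ``immediate from the definitions,'' but there is no substantive difference in the argument.
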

\begin{proof} Part (i) is immediate from the definitions of pure submodule (\cite[p.84]{Curtis-Reiner}) and $M\cap E$. By (i), the image $\overline{M\cap E}$ of 
$M\cap E$ in $\overline{M}$ is isomorphic to $(M\cap E)/\pi(M\cap E)$, and this has the same composition factors as $X/\pi X$ since $X$ and $M\cap E$ 
are both $R$-forms of $E$, by a general principle
\cite[Proposition 16.16]{Curtis-Reiner} in modular representation theory. Thus (ii)
holds.  To see (iii), we note that from the hypothesis on $b$  we have 
$\overline{M\cap E}\subseteq\overline{M}_b$, and the statement about
composition factors holds by (ii). 
\end{proof}


\section{The $p$-elementary divisors}\label{charp}
\subsection{Skew lines, $A$}\hfil \\
By Lemma~\ref{srgeq}, we have
\begin{equation}
A^{2} = kI + \lambda A + \mu(J - A - I).
\end{equation}

Rewriting, we get 
\begin{equation}
\label{eq:matrix1}
A [A - (\lambda - \mu)I] = (k-\mu)I + \mu J,
\end{equation}
where one checks that
\begin{align*}
\lambda - \mu &= -q^{n-2} - q^{n-3} - \cdots - q^{2} + q \\
&= -q(q^{n-3} + q^{n-4} + \cdots +q - 1) 
\end{align*}
and
\begin{align*}
k - \mu &= q^{n-1} + q^{n-2} + \cdots + q^{3} \\
&= q^{3}(q^{n-4} + q^{n-5} + \cdots + 1).
\end{align*}
We can use Equation~\ref{eq:matrix1} to learn much about the $p$-elementary divisors of $A$.  We may work over $\Zp$, the ring of $p$-adic integers.  Let $\LL_{2}$ denote our vertex set of $2$-spaces.  The matrix $A$ defines a $\Zp$-module homomorphism:
\[
A \colon M \to M,
\]
where $M = \ZpL{2}$ consists of all formal $\Zp$-linear combinations of the elements of $\LL_{2}$, and the map $A$ sends a $2$-space to the formal sum of the $2$-spaces that intersect it trivially.  Since $| \LL_{2} |$ is a unit in $\Zp$, we have 
\[
M = \Zp\allone \oplus Y,
\]
where $Y$ is the kernel of the augmentation map $\epsilon \colon M \to \Zp$ sending all $2$-spaces to $1$, and $\allone$ is the formal sum of all of the $2$-spaces.  The map $A$ respects this decomposition. Note that 
\begin{equation}
A(\allone) = k\allone = q^{4} \qbinom{n-2}{2},
\end{equation}
so $p^{4t}$ occurs at least once as an elementary divisor of $A$.  We will see shortly that it occurs exactly once.  Now we focus our attention on $Y$ and the restriction map  $A|_{Y} \colon Y \to Y$.

The Equation~\ref{eq:matrix1} now becomes
\begin{equation}\label{eq:snfbase}
A|_{Y} [A|_{Y} - qz_{1}I] = q^{3}z_{2}I,
\end{equation}
with $z_{1}, z_{2}$ being units in $\Zp$.  It follows immediately that $A|_{Y}$ has no elementary divisors $p^{i}$ for $i>3t$ (and so $p^{4t}$ does indeed have multiplicity $1$ as an elementary divisor of $A$).  It also follows from Equation~\ref{eq:snfbase} that the multiplicity of $p^{i}$ as an elementary divisor of $A|_{Y}$ is equal to the multiplicity of $p^{3t-i}$ as an elementary divisor of $A|_{Y} - qz_{1}I$.  Since $A|_{Y}$ and $A|_{Y} - qz_{1}I$ are congruent modulo $p^{t}$, they have the same multiplicities for elementary divisors $p^{i}$ with $0 \leq i<t$.  Hence, $p^{i}$ and $p^{3t -i}$ occur as elementary divisors for $A$ with the same multiplicity, for $0 \leq i < t$.

Now we use the eigenspaces of $A$ to determine exactly which $p$-elementary divisor multiplicities are nonzero.  From our argument above we again only need consider $A|_{Y}$ and those $p^{i}$ where $i \leq 3t$.  It will turn out that only those elementary divisors $p^{i}$ satisfying $0 \leq i \leq t$ or $2t \leq i \leq 3t$ will have a nonzero multiplicity.  Write $\Qp$ for the field of quotients of $\Zp$.  Denote by $V_{-q^{2}}$ the $\Qp$-eigenspace for $A$ for the eigenvalue $-q^{2} \qbinom{n-3}{1}$, and denote by $V_{q}$ the eigenspace for $q$.  We have the inclusions
\begin{align*}
V_{-q^{2}} \cap M &\subseteq M_{2t}(A)\\
V_{q} \cap M &\subseteq N_{t}(A).
\end{align*}
Recall that $e_{i}$ denotes the multiplicity of $p^{i}$ as an elementary divisor of $A$.  We can apply Lemma~\ref{eigenspacebound} to get the inequalities
\begin{align}\label{ineq:1}
\qbinom{n}{1} - 1 &\leq e_{2t} + \cdots + e_{3t} \\
\qbinom{n}{2} - \qbinom{n}{1} &\leq e_{0} + \cdots + e_{t}.
\end{align}
Since the left sides of the inequalities~\ref{ineq:1} sum to the dimension of $Y$, both must in fact be equalities.  Thus we see that, aside from one occurrence of the elementary divisor $p^{4t}$, the only other $p$-elementary divisors of $A$ that occur are those $p^{i}$ satisfying $0 \leq i \leq t$ or $2t \leq i \leq 3t$.  Since we have equality throughout in \ref{ineq:1}, we see that we only need to determine the multiplicities of the elementary divisors $p^{i}$ where $0 \leq i < t$ in order to know them all.

In general, if $A_{r,s}$ denotes the zero-intersection incidence matrix between $r$-spaces and $s$-spaces, then we have that 
\[
-A_{r,s} \equiv A_{r,1}A_{1,s} \pmod{p^{t}}  
\]
and so the multiplicities of $p^{i}$ as elementary divisors agree for $A_{r,s}$ and $A_{r,1}A_{1,s}$ when $0 \leq i < t$.  In \cite{skew} the $p$-elementary divisors of the matrix $A_{r,1}A_{1,s}$ were computed.  Our matrix $A$ is just $A_{2,2}$, so we want to look at $A_{2,1}A_{1,2}$.  Before we can state the theorem, we need some notation.

Let $[n-1]^{t}$ denote the set of $t$-tuples with entries from $\{1, 2, \ldots , n-1\}$.  For nonnegative integers $\alpha, \beta$, define the following subsets of $[n-1]^{t}$:
\[
\HH_{\alpha} = \left \{ (s_{0}, \ldots, s_{t-1}) \in [n-1]^{t} \vert \sum_{i=0}^{t-1} \max\{0, 2-s_{i}\} = \alpha \right \}
\]
and
\[
\leftsub{\beta}{\HH} = \left \{ (n - s_{0}, \ldots, n - s_{t-1}) \vert (s_{0}, \ldots , s_{t-1}) \in \HH_{\beta} \right \}.
\]
To each tuple $\vec{s} \in [n-1]^{t}$ we will associate a number $d(\vec{s})$ as follows.  First define the integer tuple $\vec{\lambda} = (\lambda_{1}, \ldots, \lambda_{t-1})$ by 
\[
\lambda_{i} = ps_{i+1} - s_{i}
\]
where the subscripts are read modulo $t$.  For an integer $k$, set $d_{k}$ to be the coefficient of $x^{k}$ in the expansion of $(1 + x + \cdots + x^{p-1})^{n}$, or, explicitly,
\[
d_{k} = \sum_{j=0}^{\lfloor k/p \rfloor} (-1)^{j} \binom{n}{j} \binom{n + k - jp - 1}{n-1}.
\]
Finally, we set $d(\vec{s}) = \prod_{i=0}^{t-1} d_{\lambda_{i}}$.

\begin{theorem}[\cite{skew}, Theorem 5.1 and Remark following its proof]
\label{thm:C}
Let $e_{i}$  denote the multiplicity of $p^{i}$ as a $p$-adic elementary divisor of $A_{2,1}A_{1,2}$.%
%
%
\begin{enumerate}
\item $e_{4t} = 1$.
\item For $i \neq 4t$,
\[
e_{i} = \sum_{\vec{s} \in \Gamma(i)} d(\vec{s}),
\]
where
\[
\Gamma(i) = \bigcup_{\substack{\alpha + \beta = i \\ 0 \le \alpha \le t \\ 0 \le \beta \le t}}  \leftsub{\beta}{\HH} \cap \HH_{\alpha}.
\]
\end{enumerate}
Summation over an empty set is interpreted to result in $0$.
\end{theorem}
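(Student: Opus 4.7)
The plan is to exploit the factorization
\[
A_{2,1} A_{1,2} \colon M_2 \xrightarrow{A_{1,2}} M_1 \xrightarrow{A_{2,1}} M_2,
\]
with $M_r = \ZpL{r}$, as a composition of $\Zp\GL(n,q)$-module homomorphisms, and to read the $p$-elementary divisors of the product off from known information about its two factors. The point-subspace elementary divisor computations of \cite{CSX} give a complete description of the $\Fp\GL(n,q)$-composition factors of $\FpL{1}$: they are parametrized by $t$-tuples $\vec s = (s_0, \dots, s_{t-1}) \in [n-1]^t$, and the composition factor $L(\vec s)$ has $\Fp$-dimension $d(\vec s) = \prod_i d_{\lambda_i}$ with $\lambda_i = p s_{i+1} - s_i$, reflecting the Steinberg tensor-product decomposition of an irreducible $\Fq\GL(n,q)$-module over the $t$ Frobenius twists.

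Next I would locate each composition factor in the two filtrations of Section \ref{genlemmas}. The results of \cite{CSX} pinpoint the precise $p$-adic level at which $L(\vec s)$ enters the image of $A_{1,2}$: this level is $\alpha(\vec s) = \sum_i \max\{0, 2-s_i\}$, which motivates the definition of $\HH_\alpha$. Since $A_{2,1}$ is the transpose of $A_{1,2}$ and projective duality (sending an $m$-space to its $(n-m)$-dimensional annihilator in the dual space) swaps the label $s_i$ with $n-s_i$ on composition factors, the same $L(\vec s)$ sits in the $A_{2,1}$-filtration at level $\beta(\vec s) = \sum_i \max\{0, 2-(n-s_i)\}$, recovering the set $\leftsub{\beta}{\HH}$.

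Combining the two filtrations, a copy of $L(\vec s)$ which enters the image of $A_{1,2}$ at divisibility $p^\alpha$ and passes through $A_{2,1}$ with an additional factor of $p^\beta$ contributes $d(\vec s)$ to the multiplicity of $p^{\alpha+\beta}$ as an elementary divisor of the composition. Summing over $\vec s \in \HH_\alpha \cap \leftsub{\beta}{\HH}$ with $\alpha + \beta = i$ and $0 \le \alpha,\beta \le t$ produces $e_i = \sum_{\vec s \in \Gamma(i)} d(\vec s)$ for $i \neq 4t$. The exceptional value $e_{4t}=1$ arises from the trivial $\Zp\GL(n,q)$-summand $\Zp\allone$, since both $A_{1,2}(\allone)$ and $A_{2,1}(\allone)$ are scalar multiples of $\allone$ with $p$-adic valuation $2t$.

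The main obstacle is verifying that the two filtrations combine cleanly: a composition factor that enters the image of $A_{1,2}$ at level $p^\alpha$ must leave $A_{2,1}$ at exactly level $p^\beta$, not at some smaller level that would reduce the resulting elementary divisor. This requires identifying $\GL(n,q)$-equivariant $\Zp$-lattices inside each $L(\vec s)$ and tracking the image of the incidence maps on them, using the submodule structure and the duality between $\FpL{1}$ and $\FpL{n-1}$ established in \cite{CSX}. Once the requisite dimension inequalities are in hand, Lemma \ref{eldiv} (applied with the natural choice of the $a_j$'s and the partial sums of the $d(\vec s)$'s as the $b_j$'s) delivers the formula.
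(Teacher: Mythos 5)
The paper does not prove this statement: it is imported verbatim from \cite{skew} (Theorem 5.1 and the remark following its proof), so there is no in-paper argument to compare your proposal against. Evaluating it on its own terms, the high-level plan is reasonable --- factor $A_{2,1}A_{1,2}$ through $M_1$, use the composition-factor data for $\FpL{1}$ from \cite{CSX}, and attach a $p$-level to each factor for each of the two incidence maps --- and this is indeed the flavor of argument that \cite{skew} carries out. But as written it is an outline, not a proof, and the step you yourself flag as ``the main obstacle'' is precisely where all the mathematical content lies.

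Concretely, two things are missing. First, you would need to \emph{prove}, not posit, that a composition factor entering $\mathrm{Im}(A_{1,2})$ at $p$-level $\alpha$ exits under $A_{2,1}$ at additional level exactly $\beta$, with no interference between distinct factors at the same level, no cancellation across the intermediate module $M_1$, and no extra divisibility introduced by $\Ker(A_{1,2})$ (which is large, since $\dim M_2 > \dim M_1$); elementary divisors of a product $BC$ are not in general read off from those of $B$ and $C$ separately, so this step requires tracking explicit $\Zp\GL(n,q)$-lattices, which you only gesture at. Second, to invoke Lemma~\ref{eldiv} you need both the lower bounds $\dim_{F}\overline{M}_{a_j}\geq b_j$ (the output of that lattice-tracking) \emph{and} the exact value of $d$, the $p$-adic valuation of the product of the nonzero elementary divisors of $A_{2,1}A_{1,2}$, together with the identity $\sum_j(b_j-b_{j+1})a_j=d$; you supply neither. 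As it stands the proposal describes the shape of the answer the theorem already asserts rather than deriving it.
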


\subsection{Skew lines, $L$} \hfil \\
The Laplacian matrix $L$ of the skew-lines graph is defined as 
\[
L = kI - A = q^{4}\qbinom{n-2}{2}I - A.
\]
Thus $L \equiv -A \pmod{p^{4t}}$, and so the $p$-elementary divisor multiplicities are the same for $A$ and $L$, for prime powers $p^{i}$ with $0 \leq i < 4t$.  This actually accounts for all of the $p$-elementary divisors of $L$; this can be read off from the order of the critical group which we now compute.

Since the skew-lines graph is regular, the spectrum of $L$ easily follows from that of $A$.  For $L$ we have eigenvalues 
\[
0, \, q^{4}\qbinom{n-2}{2} + q^{2}\qbinom{n-3}{1}, \, q^{4}\qbinom{n-2}{2} - q
\]
with respective multiplicities $1, \qbinom{n}{1}-1, \qbinom{n}{2}-\qbinom{n}{1}$.  By the Matrix Tree Theorem, the order of the critical group is the product of the nonzero eigenvalues of $L$ divided by the number of vertices.  We have
\begin{equation*}
|K(\Gamma)| = \frac{\left(q^{4}\qbinom{n-2}{2} + q^{2}\qbinom{n-3}{1}\right)^{\qbinom{n}{1}-1} \cdot \left(q^{4}\qbinom{n-2}{2} - q\right)^{\qbinom{n}{2}-\qbinom{n}{1}}}{\qbinom{n}{2}},
\end{equation*}
where $K(\Gamma)$ denotes the critical group of $\Gamma$.  Let $S(\Gamma)$ denote the Smith group of $\Gamma$, and compare the above order with
\begin{equation*}
|S(\Gamma)| = q^{4}\qbinom{n-2}{2} \cdot \left( -q^{2}\qbinom{n-3}{1}\right)^{\qbinom{n}{1}-1} \cdot \left( q^{4}\qbinom{n-2}{2} - q\right)^{\qbinom{n}{2} - \qbinom{n}{1}}.
\end{equation*}
Looking carefully, one sees that the $p$-parts of the two orders differ only by an extra factor of $p^{4t}$ in the order of the Smith group of $\Gamma$.  This is accounted for by the image of $\allone$ under $A$, whereas $L(\allone) = 0$.  In other words, the $p$-elementary divisors of $L$ and $A$ agree on $Y$ and these account for all of the $p$-elementary divisors of $L$.

\subsection{Grassmann, $A^{\prime}$} \hfil \\
The spectrum of $A^{\prime}$ is
\[
 q(q+1)\qbinom{n-2}{1}, \, q^{2}\qbinom{n-3}{1} - 1, \, -q-1
\]
with respective multiplicities $1, \qbinom{n}{1}-1, \qbinom{n}{2}-\qbinom{n}{1}$.  Thus we see that there is only one $p$-elementary divisor of $A^{\prime}$, and that is determined by the image of $\allone$ under $A^{\prime}$.  This image is exactly divisible by $p^{t}$, and so the $p$-Sylow subgroup of $S(A^{\prime})$ is cyclic of order $p^{t}$.

\subsection{Grassmann, $L^{\prime}$} \hfil \\
The spectrum of $L^{\prime}$ is
\[
0, \, -q^{2}\qbinom{n-3}{1} + 1 + q(q+1)\qbinom{n-2}{1}, \, q+1 + q(q+1)\qbinom{n-2}{1}
\]
with respective multiplicities $1, \qbinom{n}{1}-1, \qbinom{n}{2}-\qbinom{n}{1}$.  By the Matrix Tree Theorem the order of the critical group of $\Gamma^{\prime}$ depends only on the nonzero eigenvalues.  Since none of these are divisible by $p$, we see that $L^{\prime}$ has no $p$-elementary divisors.


\section{The $\ell$-elementary divisors for $\ell\neq p$}\label{crosschar}

Let $\ell$ be a prime different from $p$.
Let $\Rell$ be the integral extension 
of the $\ell$-adic integers obtained by adding a primitive $p$-th root of unity
and let $\Qell$ be its field of quotients.
The prime $\ell$ is unramified in $\Rell$ and we set $\Fell=\Rell/\ell\Rell$.
Let ${\LL_i}$ be the set of $i$-dimensional subspaces of $V$
and let $M=\Rell^{\LL_2}$ be the free $\Rell$-module
with basis $\LL_2$, which has the structure of an $\Rell\GL(n,q)$-permutation module.
Let $Y$ be the kernel of the homomorphism $M\to\Rell$ sending each
$2$-space to $1$, namely the map given by the matrix $J$ in the standard
basis of $M$. As before, we shall identify the matrices $A$, $L$, $A'$ and $L'$ 
with the endomorphisms of $M$ that they
define with respect to the standard basis.

We have an $A$-invariant decomposition
$$
M=\Rell\allone\oplus Y
$$
if and only if $\ell\nmid\qbinom{n}{2}$.

On extending scalars to $\Qell$, we have
\begin{equation}
M_{\Qell}=\Qell \oplus\tilde S_1 \oplus\tilde S_2,
\end{equation}
where  $\tilde S_1$ and $\tilde S_2$ are simple $\Qell\GL(n,q)$-modules
(Specht modules over $\Qell$). The matrices $A$, $L$,
$A'$ and $L'$ induce endomorphisms of $M_{\Qell}$, and the 
decomposition above is the decomposition in to eigenspaces of these
endomorphisms. The submodule $\tilde S_1$ affords the ``$r$-eigenvalues''
$r_A$, $r_L$, $r_{A'}$ and $r_{L'}$, while $\tilde S_2$ affords the
``$s$-eigenvalues'' $s_A$, $s_L$, $s_{A'}$ and $s_{L'}$.
We fix the notation $f=\dim_{\Qell}\tilde S_1=\qbinom{n}{1}-1$ and
$g=\dim_{\Qell}\tilde S_2=\qbinom{n}{2}-\qbinom{n}{1}$.

In this section we shall apply the results of \S\ref{genlemmas} to the local $PID$ $(\Rell,(\ell))$  and the  $\Rell\GL(n,q)$-modules $M$ and $Y$, to
to determine the $\ell$-elementary divisors of the matrices $A$, $L$, $A'$ and $L'$.
We shall also need to consider the 
$\Rell\GL(n,q)$-permutation modules $\Rell^{\LL_i}$ on the set of $i$-dimensional 
subspaces of $V$, for $i=0$, $1$, $2$ and $3$, as well as the corresponding
permutation modules over $\Fell$ and $\Qell$. 
These modules have been studied in depth by James \cite{James}, to which
we shall refer for the definition and basic properties of the important class
of related modules called {\it Specht modules}, which are analogous to the better known
Specht modules for the symmetric group.
Lemma~\ref{decompmap} will be applied to $M$ or $Y$, with $G=\GL(n,q)$,
$E=\tilde S_i$, for $i=1$ or $2$, and $X$ equal to the corresponding
Specht module over $R$. In this case, $X/\pi X\cong S_i$, the Specht module over
$\Fell$.  By the theory of Specht modules, we also have  $f=\dim_{\Fell}S_1$ and
$g=\dim_{\Fell}S_2$.  

James \cite[Theorem 13.3]{James} has shown that 
$\Fell^{\LL_i}$ has a a descending filtration
whose subquotients are isomorphic to the Specht modules $S_j$, for $0\leq j\leq i$,
each with multiplicity 1. Moreover, the only simple modules
that can be composition factors of the $S_j$ are certain simple modules 
$D_k$, $k\leq j$. We assume $n\geq 4$ to avoid trivialities. Then the modules
$D_k$,  $k=0$, $1$, $2$ are nonisomorphic. 
We have $D_0=S_0\cong\Fell$, the trivial module. The structure of $S_1$
is also easy to describe; if $\ell\nmid\qbinom{n}{1}$ then  $S_1=D_1$, while
if  $\ell\mid\qbinom{n}{1}$, then $S_1$ has a one-dimensional trivial radical.
In the latter case, one easily sees by self-duality of $\Fell^{\LL_1}$ that
$\Fell^{\LL_1}$ is uniserial with socle series $\Fell$, $D_1$, $\Fell$.
The structure of $S_2$ varies according to arithmetic properties;
its composition factors can be read off from James's results \cite{James}. 
It is also possible to describe all the possible submodule lattices, but we shall only give details in the cases we need
for computing the elementary divisors. A general fact is that
$S_2$ has a unique maximal submodule with quotient $D_2$. 
A consequence of this fact that we shall use later on is that
unless $S_2=D_2$,  $\Fell^{\LL_2}$ has no submodule isomorphic to $D_2$.

The composition multiplicities
$[S_i:D_j]$ are given by \cite[Theorem 20.7]{James};
they are no greater than one, with $[S_i:D_i]=1$.
(In James's notation the module $\Fell^{\LL_i}$ is denoted  
$M_{(n-i,i)}$, the Specht module $S_j$ is denoted $S_{(n-j,j)}$ and the simple module
$D_j$ is denoted $D_{(n-j,j)}$.)
The module $D_0$ is the trivial module and the module $D_1$ 
is the unique nontrivial composition factor of $\Fell^{\LL_1}$ (and of $S_1$).
The module $D_2$ is the unique composition factor of $\Fell^{\LL_2}$ 
(and of $S_2$) that is not isomorphic to $D_0$ or $D_1$.

We summarize the consequences of \cite[Theorem 20.7]{James} that we need, using our notation.
\begin{lemma}\label{Spechtmult} We have the following composition multiplicities.
\begin{enumerate}
\item[(a)] $[S_1:D_0]=1$ if $\ell\mid\qbinom{n}{1}$ and is zero otherwise.  
\item[(b)] $[S_2:D_1]=1$ if $\ell\mid\qbinom{n-2}{1}$ and is zero otherwise.
\item[(c)] $[S_2:D_0]=1$ if one of the following conditions holds 
and is zero otherwise.
\begin{enumerate}
\item[(i)] $\ell=2$ and $n\equiv 1$ or $2\pmod 4$.
\item[(ii)] $\ell\neq 2$, $\ell\mid q+1$ and $\ell\mid \lfloor\frac{n-1}{2}\rfloor$.
\item[(iii)] $\ell\neq 2$, $\ell\mid q-1$ and $\ell\mid n-1$.
\item[(iv)] $\ell\neq 2$, $\ell\nmid q-1$ and $\ell\mid\qbinom{n-1}{1}$.
\end{enumerate}
\end{enumerate}
\qed
\end{lemma}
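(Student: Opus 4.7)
The overall plan is to derive all three parts as direct consequences of James's Theorem 20.7 \cite{James}, which for pairs $(\lambda,\mu)$ of two-part partitions of $n$ provides explicit arithmetic criteria (in terms of $\ell$, $q$, and $n$) for the composition multiplicity $[S_\lambda:D_\mu]$ to equal $1$, with the understanding that otherwise it is $0$. The three statements concern the pairs $((n-1,1),(n,0))$, $((n-2,2),(n-1,1))$, and $((n-2,2),(n,0))$, corresponding to (a), (b), (c) respectively.

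Part (a) can be read off without explicit appeal to James's theorem: the composition factor $D_0 = S_0 \cong \Fell$ appears in $S_1$ exactly when $S_1 \neq D_1$, which by the discussion preceding the lemma happens precisely when $\ell \mid \qbinom{n}{1}$. In that case $\Fell^{\LL_1}$ is uniserial with socle series $\Fell$, $D_1$, $\Fell$, so $S_1$ has a one-dimensional trivial radical; otherwise $\Fell^{\LL_1}$ splits as $\Fell\allone \oplus D_1$ and the multiplicity is zero. For part (b), I would apply James's criterion to $((n-2,2),(n-1,1))$. For two-row partitions differing only in the second part by $1$, the condition reduces to the single divisibility $\ell \mid \qbinom{n-2}{1}$, yielding the claim at once.

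For part (c), the main case, James's criterion for $((n-2,2),(n,0))$ depends sensitively on the multiplicative order $d$ of $q$ modulo $\ell$ and on the parity of $\ell$, through $\ell$-adic valuations of certain products of $q$-integers $[m]_q$. The plan is to split into the four subcases (i)--(iv) according to $\ell$ and $d$: for $\ell = 2$, a direct $2$-adic analysis of the relevant $q$-integers yields the congruence $n \equiv 1, 2 \pmod 4$; for $\ell$ odd with $d = 1$ (i.e.\ $\ell \mid q-1$) the condition collapses to $\ell \mid n-1$; for $d = 2$ (i.e.\ $\ell$ odd, $\ell \mid q+1$) only $q$-integers $[m]_q$ with $m$ even are divisible by $\ell$, and James's condition unravels to $\ell \mid \lfloor (n-1)/2 \rfloor$; and for $d \geq 3$ (equivalently $\ell \nmid q^2 - 1$ with $\ell$ odd) the condition becomes $\ell \mid \qbinom{n-1}{1} = [n-1]_q$.

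The main obstacle is this case analysis in (c): James's criterion is phrased uniformly, and extracting the stated piecewise form requires carefully tracking which $q$-integers in the defining product acquire $\ell$-divisibility under each value of $d$. Case (i) is the most delicate because one must distinguish behavior modulo $2$ from that modulo $4$; cases (ii) and (iv) are not mutually exclusive but yield consistent conditions when both apply, the stronger one arising from (ii) precisely when $d = 2$. Once the order $d$ is fixed, the reduction of James's product criterion to the single stated divisibility in each subcase is a routine arithmetic verification.
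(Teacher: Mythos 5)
Your approach matches the paper's: the lemma carries no separate proof and is presented as a read-off from James's Theorem 20.7, which is also your plan, and splitting part (c) by the multiplicative order $d$ of $q$ modulo $\ell$ (with $\ell=2$ treated separately) is the right organizing framework. One caution, though: your remark that cases (ii) and (iv) ``yield consistent conditions when both apply'' does not hold if (iv) is read literally. When $\ell\neq 2$, $\ell\mid q+1$ and $n$ is odd, the stated hypothesis of (iv) is automatically satisfied (as $n-1$ even gives $\ell\mid\qbinom{n-1}{1}$), which would force $[S_2:D_0]=1$; yet the paper's appendix Table A and its use of this lemma in Case (i) of the Grassmann $L'$ section take $S_2=D_2$ under precisely some of those hypotheses. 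The reading consistent with the rest of the paper --- and the one your $d$-based derivation actually produces --- is that (ii), (iii), (iv) are mutually exclusive, covering $d=2$, $d=1$ and $d\geq 3$ respectively, so the hypothesis of (iv) should be taken as $\ell\nmid q^2-1$ rather than merely $\ell\nmid q-1$. Trust your derivation by $d$ rather than trying to reconcile an apparent overlap with (ii).
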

Here $\lfloor x \rfloor$ denotes the greatest integer that is no greater than $x$.

We end this section with some elementary properties of $q$-binomial
coefficients that will be used frequently without further mention.
\begin{lemma}\label{qbinoms} Suppose $\ell\mid q+1$. Then
\begin{enumerate}
\item[(i)] $\ell\mid \qbinom{m}{1}$ if and only if $m$ is even.
\item[(ii)] Assume that $m$ is even. Then
\begin{equation*}
\qbinom{m}{1}=(q+1)\qqbinom{m/2}{1}:=(q+1)(q^{m-2}+q^{m-4}+\cdots+q^2+1).
\end{equation*}
\end{enumerate}
\end{lemma}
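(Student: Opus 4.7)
The plan is to expand $\qbinom{m}{1}=1+q+q^2+\cdots+q^{m-1}$ as a finite geometric sum and exploit the congruence $q\equiv -1\pmod{\ell}$ that follows from the hypothesis $\ell\mid q+1$. Both parts are then immediate algebraic consequences; no heavy machinery is required.

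For part (i), I would reduce the expansion modulo $\ell$ to obtain
\[
\qbinom{m}{1}\equiv\sum_{i=0}^{m-1}(-1)^i\pmod{\ell}.
\]
When $m$ is even, the $m$ signs pair up to give $0$, so $\ell\mid\qbinom{m}{1}$. When $m$ is odd, the pairing leaves one unmatched term equal to $(-1)^{m-1}=1$, so the sum reduces to $1\pmod{\ell}$ and hence is not divisible by $\ell$. This single argument is uniform in $\ell$: for $\ell=2$ (where $-1\equiv 1$) the reduction becomes $m\bmod 2$, giving the same conclusion.

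For part (ii), assuming $m$ even, I would write $q^m-1=(q^2)^{m/2}-1$ and factor the right-hand side via the geometric series in $q^2$:
\[
q^m-1=(q^2-1)\bigl(q^{m-2}+q^{m-4}+\cdots+q^2+1\bigr).
\]
Dividing both sides by $q-1$ and using $q^2-1=(q-1)(q+1)$ then gives
\[
\qbinom{m}{1}=(q+1)\bigl(q^{m-2}+q^{m-4}+\cdots+q^2+1\bigr)=(q+1)\qqbinom{m/2}{1},
\]
which is the stated identity (and in fact makes part (i) visible a second time, since the factor $q+1$ is divisible by $\ell$ by hypothesis).

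I do not anticipate any real obstacle; the whole lemma is a bookkeeping corollary of the geometric-sum formula. The only sliver of care is verifying that the argument in (i) is insensitive to whether $\ell=2$, but as noted above the pairing (or the explicit $m\bmod 2$ count) handles that case with no change.
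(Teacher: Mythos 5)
Your proof is correct. The paper itself gives no argument for this lemma (it is stated with an immediate \qed, treated as an elementary fact), and your geometric-sum expansion together with the reduction $q\equiv -1\pmod{\ell}$ is exactly the standard verification one would supply; the factorization $q^m-1=(q^2-1)\qqbinom{m/2}{1}$ for part (ii) is likewise the intended bookkeeping.
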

\qed


\section{Grassmann, $L^{\prime}$}\label{Lprime}
\subsubsection{Parameters}
\begin{equation}
\begin{aligned}
k'&=q(q+1)\qbinom{n-2}{1},\quad  \mu'=(q+1)^2, \quad \lambda'=\qbinom{n-1}{1}+q^2-2,\\
r_{L'}&=q(q+1)\qbinom{n-2}{1}-q^2\qbinom{n-3}{1}+1=\qbinom{n}{1},\\
s_{L'}&=q(q+1)\qbinom{n-2}{1}+(q+1)=(q+1)\qbinom{n-1}{1}.
\end{aligned}
\end{equation}

In this section we shall write $r$ for $r_{L'}$ and $s$ for $s_{L'}$.
The multiplicity of $r$ is  $f=\dim_{\Qell}\tilde S_1=\qbinom{n}{1}-1$ and
that of $s$ is $g=\dim_{\Qell}\tilde S_2=\qbinom{n}{2}-\qbinom{n}{1}$.

\begin{equation}
\begin{aligned}
\abs{K(\Gamma')}&= \frac{\qbinom{n}{1}^f((q+1)\qbinom{n-1}{1})^g}{\qbinom{n}{2}}\\
&=\qbinom{n}{1}^{f-1}(q+1)^{g+1}\qbinom{n-1}{1}^{g-1}.
\end{aligned}
\end{equation}

\subsubsection{$\ell\nmid q+1$}
If  $\ell\nmid\qbinom{n}{1}$ and  $\ell\nmid\qbinom{n-1}{1}$.
then, $\ell\nmid \abs{K(\Gamma')}$, so we can assume 
 that $\ell$ divides exactly one of  $\qbinom{n}{1}$ and  $\qbinom{n-1}{1}$, since these numbers are coprime.

Case (i). $\ell\mid \qbinom{n}{1}$  and $\ell\nmid \qbinom{n-1}{1}$. 
Let $a=v_\ell(\qbinom{n}{1})$.  
Then  $v_\ell(\abs{K(\Gamma')})=a(f-1)$.
By Lemma~\ref{eigenspacebound} and the definitions of $M_i$ and $e_i$, we have
\begin{equation}
f\leq\dim_\Fell\overline M_a.
\end{equation}
Therefore, by Lemma~\ref{eldiv} with $d=a(f-1)$, $h=1$, $b_2=1$, $b_1=f$ and $a_1=a$,
we obtain
\begin{equation}
e_a=f-1, \quad e_0=g+1, \quad\text{and $e_i=0$ otherwise.} 
\end{equation}

(Moreover, from Lemma~\ref{Spechtmult}, $\dim_{\Fell}D_1=f-1$ and in fact it is
not hard to deduce that $\overline M_a/\overline M_{a+1}\cong D_1$.)

Case (ii). $\ell\nmid \qbinom{n}{1}$  and $\ell\mid \qbinom{n-1}{1}$.
Let $a=v_\ell(\qbinom{n-1}{1})$. 
Then  $v_\ell(\abs{K(\Gamma')})=a(g-1)$.
By Lemma~\ref{eigenspacebound}
\begin{equation}
g\leq \dim_\Fell\overline M_a.
\end{equation}
Therefore, by Lemma~\ref{eldiv} with $d=a(g-1)$, $h=1$, $b_2=1$, $b_1=g$ and $a_1=a$,
we obtain
\begin{equation}
e_a=g-1, \quad e_0=f+1, \quad\text{and $e_i=0$ otherwise.} 
\end{equation}
(Moreover, from Lemma~\ref{Spechtmult}, $\dim_{\Fell}D_2=g-1$ and in fact it is
not hard to deduce that
$\overline M_a/\overline M_{a+1}\cong D_2$.)

\subsubsection{$\ell\mid q+1$}

Case (i) $\ell\nmid\qbinom{n}{1}$, $\ell\mid\qbinom{n-1}{1}$, $\ell\nmid \qqbinom{(n-1)/2}{1}$. Let $a=v_\ell(q+1)$.
Then since  $\qbinom{n-1}{1}=(q+1)\qqbinom{(n-1)/2}{1}$, we have
$v_\ell(\abs{K(\Gamma')})=a(g+1)+a(g-1)=2ag$ and $v_\ell(s)=2a$. 
Also, $\ell\nmid\qbinom{n}{2}$, so $M=\Rell\allone\oplus Y$, where 
$Y$ is the kernel of the map $J$ sending each $2$-subspace
to $\allone$, and $L'$ maps each summand to itself. We have $\Rell\allone=\Ker(L')$
and $Y_\Qell$ contains the $s$-eigenspace of dimension $g$. 
Hence by Lemma~\ref{eigenspacebound}, applied to $Y$, we have $g\leq \dim_\Fell\overline{M_{2a}\cap Y}$, so
\begin{equation}
g+1\leq \dim_\Fell\overline{M}_{2a}.
\end{equation}

Therefore, by Lemma~\ref{eldiv} with $d=2ag$, $h=1$, $b_2=1$, $b_1=g+1$ and $a_1=2a$,
we obtain
\begin{equation}
e_{2a}=g, \quad e_0=f, \quad\text{and $e_i=0$ otherwise.} 
\end{equation}
(Moreover, from Lemma~\ref{Spechtmult}, $\dim_{\Fell}D_2=g$, $\dim_{\Fell}D_1=f$ and in fact it is
not hard to deduce that $\overline M\cong\Fell\allone\oplus D_1\oplus D_2$, with
$\overline M_{2a}/\overline M_{2a+1}\cong D_2$.)

Case (ii) $\ell\nmid\qbinom{n}{1}$, $\ell\mid\qbinom{n-1}{1}$, $\ell\mid \qqbinom{(n-1)/2}{1}$.
Let  $a=v_\ell(q+1)$ and $b=v_\ell(\qqbinom{(n-1)/2}{1})$.
Then   $v_\ell(\abs{K(\Gamma')})=a(g+1)+(a+b)(g-1)=(2a+b)(g-1)+2a$ and $v_\ell(s)=2a+b$. 

We will extract some information from the equation (Lemma~\ref{srgeq})
\begin{equation}\label{Lprimemu}
(L'-sI)(L'-rI)=\mu'J.
\end{equation}
Modulo $\ell$ we have
\begin{equation}
(\overline{L'})(\overline{L'}- rI)=0.
\end{equation}
Since $r\neq 0\pmod\ell$ this equation 
shows that $\overline M$ has a direct sum decomposition into 
eigenspaces $\Ker\overline{L'}$  and $\Ker(\overline{L'}-rI)$.
Since $s\equiv 0\pmod\ell$, the (algebraic) multiplicity of $0$ as an eigenvalue
of $\overline {L'}$ is $g+1$, whence $\dim_\Fell\Ker\overline{L'}=g+1$ and
$\dim_\Fell\Ker(\overline{L'}-rI)=f$.
It also follows that $\Ker\overline{L'}=\Image(\overline{L'}-rI)$, so we have shown
that $\dim_\Fell\overline{\Image(L'-rI)}= g+1$.
Now (\ref{Lprimemu}) implies that for any $x\in\Image(L'-rI)$ we have
$L'x\in sx+\mu'\Rell\allone$, so, as $\ell^{2a}\mid s$ and 
$\ell^{2a}\mid \mu'$ we have $\Image(L'-rI)\subseteq M_{2a}$.
We have proved that 
\begin{equation}
g+1\leq \dim_\Fell\overline M_{2a}.
\end{equation}
By Lemma~\ref{eigenspacebound}  we also have 
\begin{equation}
g\leq \dim_\Fell\overline M_{2a+b}.
\end{equation}
Therefore, by Lemma~\ref{eldiv} with $d=(2a+b)(g-1)+2a$, $h=2$, $b_3=1$, $b_2=g$, $b_1=g+1$,  $a_2=2a+b$ and  $a_1=2a$, 
we obtain
\begin{equation}
e_{2a+b}=g-1,\quad  e_{2a}=1 \quad e_0=f, \quad\text{and $e_i=0$ otherwise.} 
\end{equation}

For the next two cases, we need the following lemma.
\begin{lemma}\label{ptlines} Assume that $\ell\mid q+1$ and that $\ell\mid\qbinom{n}{1}$. 
Let $a=v_\ell(q+1)$.
Let $W_P$ be the set of $2-spaces$
containing a fixed $1$-space $P$, and let $[W_P]\in M$ be its characteristic
vector. Then $ [W_P]\in M_a$. 
\end{lemma}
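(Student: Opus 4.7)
The plan is to compute $L'([W_P])$ directly as a linear combination of $[W_P]$ and $\allone$ (the characteristic vector of all $2$-spaces), and then check that each coefficient is divisible by $\ell^a$.

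First I would compute $A'([W_P])$ by a double-counting argument. For a fixed $2$-space $U'$, its coefficient in $A'([W_P])$ equals the number of $2$-spaces $U\supset P$ with $\dim(U\cap U')=1$. Splitting according to whether $P\subset U'$: when $P\subset U'$, every $2$-space of $W_P$ other than $U'$ meets $U'$ in exactly $P$, giving a count of $\qbinom{n-1}{1}-1$; when $P\not\subset U'$, each of the $q+1$ lines of $U'$ spans with $P$ a distinct member of $W_P$ that meets $U'$ in that line, giving a count of $q+1$. Hence $A'([W_P]) = (\qbinom{n-1}{1}-1)[W_P] + (q+1)(\allone - [W_P])$, and therefore
\[
L'([W_P]) = \bigl(k' - \qbinom{n-1}{1} + q + 2\bigr)\,[W_P] - (q+1)\,\allone.
\]

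Second, I would verify divisibility by $\ell^a$. The coefficient $q+1$ is divisible by $\ell^a$ by the definition of $a$. The term $k' = q(q+1)\qbinom{n-2}{1}$ also has $q+1$ as a factor, so $\ell^a \mid k'$. For the remaining part $-\qbinom{n-1}{1}+q+2$, I would first use Lemma~\ref{qbinoms}(i) to deduce from the hypotheses $\ell\mid \qbinom{n}{1}$ and $\ell\mid q+1$ that $n$ is even. The sum $\qbinom{n-1}{1} = 1+q+\cdots+q^{n-2}$ then has an odd number ($n-1$) of terms, and reducing modulo $\ell^a$ with $q\equiv -1$ gives the alternating sum $\sum_{i=0}^{n-2}(-1)^i = 1$. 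Hence $-\qbinom{n-1}{1}+q+2 \equiv -1 + q + 2 = q+1 \equiv 0 \pmod{\ell^a}$, completing the check.

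The argument is essentially a single computation and there is no real obstacle; the only piece of insight needed is the observation that $A'([W_P])$ already lies in the two-dimensional span of $[W_P]$ and $\allone$, which is what makes the verification collapse to two elementary divisibility checks.
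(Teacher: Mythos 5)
Your proof is correct and takes essentially the same approach as the paper: both rest on the same double-counting computation of $A'([W_P])$, with coefficient $\qbinom{n-1}{1}-1$ on lines through $P$ and $q+1$ on lines not through $P$. The only cosmetic difference is that the paper first reduces to $A'$ via $(q+1)\mid k'$ and then invokes the identity $\qbinom{n-1}{1}-1=q\qbinom{n-2}{1}$ together with $(q+1)\mid\qbinom{n-2}{1}$, whereas you keep $L'$ throughout and verify divisibility by reducing $q\equiv -1\pmod{\ell^{a}}$; these are trivially equivalent calculations.
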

\begin{proof}
Since $a=v_\ell(q+1)$ and  $q+1\mid k'$, it suffices to show that $[W_P]\in M_a(A')$.
Let $A'([W_P])=\sum_{m\in\LL_2} a_m m$. Then the coefficient $a_m$
is the number of lines through $P$ that are distinct from $m$ and meet $m$.
This number is $\qbinom{n-1}{1}-1=q\qbinom{n-2}{1}$ if $m$ passes through $P$ and 
$q+1$ if $m$ does not. Since $q+1\mid\qbinom{n}{1}$ under our assumptions, we also have $q+1\mid\qbinom{n-2}{1}$, so the Lemma holds.
\end{proof}

Case (iii) $\ell\mid\qbinom{n}{1}$, $\ell\nmid\qbinom{n-1}{1}$, 
$\ell\nmid\qqbinom{n/2}{1}$,  $\ell\nmid\lfloor\frac{n-1}{2}\rfloor$. 
Then $\qbinom{n}{1}=(q+1)\qqbinom{n/2}{1}$. Let $a=v_\ell(q+1)$.
Then   $v_\ell(\abs{K(\Gamma')})=a(f+g)$. 
We have $\ell\nmid\qbinom{n}{2}$, so shall always apply
Lemma~\ref{eigenspacebound} to the complementary summand $Y$ to $\Rell\allone$ in $M$.
Write
\begin{equation}
(L'-rI)(L'-sI)=\mu'J
\end{equation}
as
\begin{equation}
L'(L'-(r+s)I)=-rsI+\mu'J
\end{equation}

to get $\Image(L'-(r+s)I)\subseteq M_{2a}$, hence
$\overline M_{2a}\supseteq \overline{\Image(L'-(r+s)I)}=
\overline{\Image(L')}=\Image(\overline{L'})$.
It is easy to see that $\Image(\overline{L'})$ is not in the span of $\allone$, and that
$\allone\in M_{2a}$. Since $S_2$ is not simple in this case, by Lemma~\ref{Spechtmult}, the only
simple module that can appear in the socle of $\overline Y$ is $D_1$.
It follows that $\dim_\Fell\overline M_{2a}\geq 1+\dim_\Fell D_1=f$.
Next, we will show that $\dim_{\Fell}(\overline{M}_{a}) \geq g+2$.  Over $\Qell$, the submodule $\tilde S_2$ is the $s$-eigenspace
of $M_\Qell$, so by Lemma~\ref{decompmap} and Lemma~\ref{Spechtmult}
the composition factors of $\overline{M\cap\tilde S_2}$ are $D_1$ and $D_2$.

The map $\eta$ from $\Rell^{\LL_1}\to M$ sending $P$ 
to $[W_P]$ is an $\Rell\GL(n,q)$-homomorphism. From the structure
of $\Fell^{\LL_1}$, it is not hard to see that 
$\Image(\overline\eta)$ has composition factors 
$\Fell$ and $D_1$, and simple socle $D_1$.  In particular, $\Image(\overline\eta)$ does not contain $\allone$.  Using the preceding paragraph and Lemma~\ref{ptlines}  
we now see that  $\overline{M}_{a}$ has among its composition factors $\Fell$ (occurring twice), $D_{1}$ and $D_2$. This proves our claim  that $\dim_{\Fell}(\overline M_a)\geq g+2$. 

Therefore, by Lemma~\ref{eldiv} with $d=a(f+g)$, $h=2$, $b_3=1$, $b_2=f$, $b_1=g+2$,  $a_2=2a$ and  $a_1=a$, 
we obtain
\begin{equation}
e_{2a}=f-1, \quad e_{a}=g-f+2, \quad e_0=f-1, \quad\text{and $e_i=0$ otherwise.} 
\end{equation}

Case(iv)   $\ell\mid\qbinom{n}{1}$, $\ell\nmid\qbinom{n-1}{1}$,
$\ell\mid\qqbinom{n/2}{1}$. In this case $\ell\mid\frac{n}{2}$ so
$\ell\nmid\lfloor\frac{n-1}{2}\rfloor$. Let $a=v_\ell(q+1)$
and $b=v_\ell(\qqbinom{n/2}{1})$.
Then   $v_\ell(\abs{K(\Gamma')})=(a+b)f+ag-b$. 
We claim that  $\dim_{\Fell}(\overline M_a)\geq g+2$.  
Consider the map $\eta:\Rell^{\LL_1}\to M$ sending a $1$-subspace $P$ to $[W_P]$.
Since $\ell\mid q+1$, the all-one vector of $\Fell^{\LL_1}$ is in the kernel of 
$\overline\eta$, and from the uniserial structure of $\Fell^{\LL_1}$ it is not hard to see that its span equals the kernel.
It follows that $\Image(\overline\eta)$ is of dimension $f$ and has no fixed points,
and, in particular, does not contain $\allone$. However, by Lemma~\ref{ptlines} 
$\Image(\eta)\subseteq M_a$, so we know that
$\overline M_a$ has composition factors $F$ (twice), $D_1$.
Also, since $M\cap\tilde S_2\subseteq M_a$, we know by Lemma~\ref{decompmap} that
$\overline M_a$ contains the composition factors $D_1$ and $D_2$ of $S_2$.
It follows that $\overline M_a$ must have at least composition
factors $F$ (twice), $D_1$ and $D_2$, so is of dimension $\geq g+2$.

Let $Y=\Ker(J)$. Here $Y$ is not a direct summand of $M$ 
as an  $\Rell\GL(n,q)$-module but it is a pure $\Rell\GL(n,q)$-submodule.

\begin{lemma}\label{Ybariv} Under the current hypotheses on $\ell$, we have 
$\Fell\allone\subseteq\overline Y$ and $\overline Y/\Fell\allone$ is uniserial with socle series $D_1$, $D_2$, $D_1$. 
\end{lemma}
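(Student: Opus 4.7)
The plan is to verify the two assertions in turn. For the first, $\Fell\allone\subseteq\overline Y$, it suffices to show that $\ell\mid\qbinom{n}{2}$, since then $\allone$ is congruent modulo $\ell M$ to an element of $Y=\Ker(\epsilon)$. Writing
\[
\qbinom{n}{2}=\frac{\qbinom{n}{1}\qbinom{n-1}{1}}{q+1}
\]
and applying Lemma~\ref{qbinoms} with the Case~(iv) hypotheses, one computes
$v_\ell(\qbinom{n}{2})=v_\ell(\qqbinom{n/2}{1})=b\geq 1$, delivering the containment.

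For the structural claim I would invoke James's Specht filtration
$0\subset N_2\subset N_1\subset \overline M$, with $N_2\cong S_2$, $N_1/N_2\cong S_1$, and $\overline M/N_1\cong S_0=D_0$. Since $\mathrm{Hom}_{\Fell\GL(n,q)}(\overline M,\Fell)$ is one-dimensional (spanned by the augmentation), the top quotient of the filtration agrees with $\epsilon$, so $\overline Y=N_1$. Under the Case~(iv) hypotheses, Lemma~\ref{Spechtmult} together with Lemma~\ref{qbinoms} yields $[S_1:D_0]=1$, $[S_2:D_1]=1$, and $[S_2:D_0]=0$; combined with the general fact that each $S_i$ has a unique maximal submodule with quotient $D_i$, this forces $S_1$ to be uniserial with socle $D_0$ and head $D_1$, and $S_2$ to be uniserial with socle $D_1$ and head $D_2$. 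Because $[S_2:D_0]=0$, we have $\Fell\allone\cap N_2=0$, so $N_2$ embeds into $\overline Y/\Fell\allone$, while the image of $\Fell\allone$ in $\overline Y/N_2\cong S_1$ is precisely the unique $D_0$-submodule $\soc(S_1)$. Therefore $(\overline Y/\Fell\allone)/N_2\cong S_1/\soc(S_1)\cong D_1$, producing a composition series $D_1,D_2,D_1$ from bottom to top on $\overline Y/\Fell\allone$.

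To promote this composition series to a socle series, I would use self-duality. Since $\overline M$ is a self-dual permutation module and $\overline Y$ is the orthogonal complement of $\Fell\allone$ under the standard pairing, dualizing the short exact sequence $0\to\Fell\allone\to\overline Y\to\overline Y/\Fell\allone\to 0$ yields $\overline Y/\Fell\allone\cong(\overline Y/\Fell\allone)^*$. No $D_0$ appears in $\soc(\overline Y/\Fell\allone)$, since $D_0$ is not a composition factor. To rule out $D_2$ in the socle, I would pull back a hypothetical $D_2$-submodule to $P\subseteq\overline Y$ with $0\to\Fell\allone\to P\to D_2\to 0$: if $P\cong D_0\oplus D_2$, then $\overline M$ would contain a copy of $D_2$, contradicting the earlier remark that $\overline M$ has no $D_2$-submodule when $S_2\neq D_2$; if $P$ is uniserial with socle $\Fell\allone$, then $P\cap N_2\in\{0,\Fell\allone,P\}$, and combined with $\Fell\allone\cap N_2=0$ and the absence of $D_2$ from $S_1$, this forces $P\subseteq N_2$, contradicting $[S_2:D_0]=0$. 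Hence the socle consists only of copies of $D_1$; self-duality rules out $D_1\oplus D_1$, since then the head would also be $D_1\oplus D_1$ with no room for the $D_2$ factor. Thus $\soc(\overline Y/\Fell\allone)\cong D_1$ and by self-duality the head is $D_1$ too; a length-three module with simple socle and simple head is uniserial, so $\overline Y/\Fell\allone$ has the claimed socle series $D_1,D_2,D_1$.

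The main obstacle is the last step---excluding a $D_2$-submodule from the socle of the quotient. This demands the ad hoc lift-and-case-analysis above, leveraging both the absence of $D_0$ from $S_2$ and the general observation that $\overline M$ contains no copy of $D_2$ when $S_2\neq D_2$.
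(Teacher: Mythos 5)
Your proof is correct, and at bottom it follows the same route as the paper's: establish $\ell\mid\qbinom{n}{2}$ (giving $\Fell\allone\subseteq\overline Y$ and self-duality of $\overline Y/\Fell\allone$), pass the uniserial Specht module $S_2$ into the quotient, and then use self-duality to promote the resulting composition series $D_1,D_2,D_1$ to a socle series. You spell out some steps the paper leaves implicit — that $\Fell\allone\cap N_2=0$, that the image of $\Fell\allone$ in $\overline Y/N_2\cong S_1$ is $\soc(S_1)$, and the final socle check. One remark: the lift-back-and-case-analysis you give to exclude $D_2$ from $\soc(\overline Y/\Fell\allone)$ is unnecessary. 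Once you know $U\cong S_2$ sits inside $Z:=\overline Y/\Fell\allone$ as a uniserial submodule with $Z/U\cong D_1$, you automatically have $\soc(Z)\cap U=\soc(U)\cong D_1$ and $\soc(Z)/(\soc(Z)\cap U)\hookrightarrow Z/U\cong D_1$, so $\soc(Z)$ is $D_1$ or $D_1\oplus D_1$ with no possibility of a $D_0$ or $D_2$ summand; self-duality then finishes as you say. So the extra "pull back $P$" argument, while not wrong, can simply be dropped.
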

\begin{proof}
The composition factors are given by Lemma~\ref{Spechtmult}.
Since $\ell\mid\qbinom{n}{2}$ we have 
$\Fell\allone\subseteq\overline Y=(\Fell\allone)^\perp$, and $\overline Y/\Fell\allone$ is self-dual with composition factors $D_1$, $D_2$, $D_1$. The image of the Specht $S_2$ in
$\overline Y/\Fell\allone$ has composition factors $D_2$ and $D_1$,
so is uniserial. The lemma now follows from the 
self-duality of $\overline Y/\Fell\allone$.
\end{proof}


We claim that  $\dim_{\Fell}(\overline M_{2a+b})\geq f$. To see this consider
the image of  $Y$ under $L'-(r+s)I$.  
Since $Y=\Ker(J)$, the equation
\begin{equation}
L'(L'-(r+s)I)=-rsI+\mu'J
\end{equation}
implies that  $(L'-(r+s)I)(Y)\subseteq M_{2a+b}$.
It follows that $\overline{L'}(\overline Y)\subseteq\overline M_{2a+b}$,
and it is easy to check that $\overline{L'}(\overline Y)$ is not in the span of
$\allone$.
Since also $\allone\in M_{2a+b}$, it follows 
that $\overline M_{2a+b}$ has at least one trivial composition factor
and one composition factor isomorphic to $D_1$,  so its dimension is at least $f$.

Therefore, by Lemma~\ref{eldiv} with $d=(a+b)f+ag-b$, $h=2$, $b_3=1$, $b_2=f$, $b_1=g+2$,  $a_2=2a+b$ and  $a_1=a$, 
we obtain
\begin{equation}
e_{2a+b}=f-1,\quad  e_{a}=g-f+2, \quad e_0=f-1, \quad\text{and $e_i=0$ otherwise.} 
\end{equation}

Case(v)  $\ell\mid\qbinom{n}{1}$, 
$\ell\nmid\qbinom{n-1}{1}$, 
$\ell\nmid\qqbinom{n/2}{1}$, 
$\ell\mid\lfloor\frac{n-1}{2}\rfloor$. Let
$a=v_\ell(q+1)$.
Here,  $r$ and $s$ are exactly divisible by $\ell^a$ and
$\ell\nmid \qbinom{n}{2}$, so $v_\ell(\abs{K(\Gamma')})=a(f+g)$.
Also, since $\ell\nmid \qbinom{n}{2}$, $\overline M=\Fell\allone\oplus \overline Y$, where
$Y=\Ker(J)$. This is an orthogonal decomposition with respect to the 
dot product on $\overline M$.

\begin{lemma}\label{uniserial_module} $\overline Y$ is a uniserial
$\Fell\GL(n,q)$-module with socle series $D_1$, $\Fell$, $D_2$, $\Fell$, $D_1$.
\end{lemma}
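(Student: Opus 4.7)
The plan is to combine James's Specht filtration of $\overline M$ with a socle/head analysis of $\overline Y$ and a Loewy-length count.

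First, the Specht filtration $0\subset S_2\subset\overline Y\subset \overline M$ (with $\overline M/\overline Y\cong S_0=\Fell$ and $\overline Y/S_2\cong S_1$) together with Lemma~\ref{Spechtmult} give composition factors $2\Fell$, $2D_1$, $D_2$ for $\overline Y$, since under the case (v) hypotheses $S_1$ has factors $\Fell,D_1$ and $S_2$ has factors $\Fell,D_1,D_2$.

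Second, I would show both $\soc\overline Y$ and the head of $\overline Y$ are simple copies of $D_1$. Transitivity of $\GL(n,q)$ on $\LL_2$ gives that the fixed subspace of $\overline M$ is $\Fell\allone$, disjoint from $\overline Y$, so $\overline Y$ contains no trivial submodule. Since $S_2$ is not simple, the remark preceding Lemma~\ref{Spechtmult} excludes $D_2$ as a submodule of $\overline M$, and hence of $\overline Y$; thus $\soc\overline Y$ is a sum of copies of $D_1$. The restriction of $\eta:\Rell^{\LL_1}\to M$, $P\mapsto[W_P]$, to the augmentation kernel $W_1\subset\Fell^{\LL_1}$ maps into $\overline Y$ (since $J\circ\eta=\qbinom{n-1}{1}\epsilon$) and has image isomorphic to $W_1/\Fell\allone_{\LL_1}\cong D_1\subseteq\soc\overline Y$. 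Self-duality of $\overline Y$ (the orthogonal complement of $\Fell\allone$ in the self-dual permutation module $\overline M$) together with self-duality of each $D_i$ forces $\dim_{D_1}\soc\overline Y=\dim_{D_1}(\overline Y/\operatorname{rad}\overline Y)$; since the total $D_1$-multiplicity in $\overline Y$ is $2$, both socle and head are simple.

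Third, to identify the middle, set $L:=\soc(\overline Y/\soc\overline Y)$. A hypothetical $D_1$- or $D_2$-summand of $L$ would lift to a length-$2$ submodule $W\subset\overline Y$ with $\soc W=\soc\overline Y$, whose image $W/(W\cap S_2)$ must embed into $S_1$; since $S_1$ has no $D_1$- or $D_2$-submodule and the unique length-$2$ submodule of the uniserial $S_2$ has top $\Fell$, one obtains a contradiction in each case. Hence $L\subseteq\Fell^{\oplus 2}$, and the projection $\tilde\eta$ of $\overline\eta$ onto $\overline Y$ (by subtracting the $\Fell\allone$-component) produces a length-$2$ submodule of $\overline Y$ with socle $D_1$ and top $\Fell$, giving $\Fell\subseteq L$. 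By self-duality, $L\cong L':=\operatorname{rad}\overline Y/\operatorname{rad}^2\overline Y$. Enumerating the possible Loewy lengths $d$ of $\overline Y$: $d\leq 2$ yields too few composition factors, $d=3$ would force $L$ to contain the unique $D_2$ composition factor (excluded), $d=4$ gives $2+2|L|=5$ with no integer solution; hence $d=5$ and each Loewy layer is simple. With $L=\Fell=L'$ adjacent to socle and head, the unique $D_2$ factor occupies the middle layer, yielding the uniserial socle series $D_1,\Fell,D_2,\Fell,D_1$.

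The main obstacle is the exclusion of $D_1$- and $D_2$-summands from $L$, requiring a case-by-case analysis of how candidate length-$2$ submodules of $\overline Y$ intersect $S_2$ and map to the quotient $\overline Y/S_2\cong S_1$.
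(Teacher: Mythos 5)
Your proposal takes the same essential route as the paper (Specht filtration of $\overline M$ plus self-duality of $\overline Y$), but it is considerably more explicit. The paper's own proof is very terse: it establishes that $S_2$ is uniserial with socle series $D_1,\Fell,D_2$ (using that $\overline Y$, hence $S_2$, has no fixed points, and that $D_2$ is the unique simple quotient of $S_2$) and then simply asserts that the lemma follows by self-duality of $\overline Y$. You fill in precisely what that appeal to self-duality is hiding: you identify $\overline Y/S_2\cong S_1$, pin down the socle and head of $\overline Y$ as simple copies of $D_1$, analyze the second socle layer $L$, and close with a Loewy-length enumeration that forces uniseriality. The Loewy-length count (ruling out $d\le 4$ by a parity/composition-length argument) is a clean way to conclude that the paper leaves implicit. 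Two places where you should tighten the exposition: when you conclude that the socle is a single $D_1$ from the $D_1$-multiplicity being $2$, you need the extra observation that socle $=\text{head}=D_1^{\oplus 2}$ would force $D_1^{\oplus 2}$ to split off, leaving a complement with composition factors $\Fell,\Fell,D_2$ and nonzero socle, yet no trivial or $D_2$ submodule is possible, a contradiction; and when you claim the projected image $\tilde\eta(W_1)$ is isomorphic to $D_1$, you should justify that $\overline\eta$ does not kill all of $W_1$ (e.g.\ by the argument the paper uses in Case (v): the submodule generated by the $\allone-[W_Q]$ is shown nonzero and uniserial with socle series $D_1,\Fell$). With those two small additions, your argument is a correct and more self-contained version of the paper's proof.
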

\begin{proof}
The composition factors of $\overline Y$ and the Specht 
modules can be read off from Lemma~\ref{Spechtmult}. 
Since the space of $\GL(n,q)$-fixed points on $\overline M$ is
$\Fell\allone$, there are no fixed points on $\overline Y$, hence none
on its submodule $S_2$. Since $D_2$ is the unique simple quotient
of $S_2$, it now follows that $S_2$ is uniserial with socle series
$D_1$, $\Fell$, $D_2$. The lemma is now implied by the self-duality
of $\overline Y$.
\end{proof}

\begin{lemma}\label{planelines} Let $W_\pi$ be the set of $2-spaces$
contained in a fixed $3$-space $\pi$, and let $[W_\pi]\in M$ be its characteristic
vector. Then $\allone-[W_\pi]\in M_a(L')$ and $\allone-[W_\pi]\in M_a(A')$.
\end{lemma}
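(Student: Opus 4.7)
The plan is to verify the membership directly from the definition of $M_a(\phi)$: I must show that every coefficient of $A'(\allone-[W_\pi])$ and of $L'(\allone-[W_\pi])$ is divisible by $\ell^a$. Since $a=v_\ell(q+1)$, it suffices to prove divisibility by $q+1$. Two trivial reductions simplify everything: $A'(\allone)=k'\allone$ by $k'$-regularity, and $L'(\allone)=0$, so both computations reduce to an explicit expansion of $A'([W_\pi])$ at a single $2$-space $m$.

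The core step is to split the count $(A'[W_\pi])_m = \#\{m'\subseteq\pi \mid \dim(m\cap m')=1\}$ according to $d:=\dim(m\cap\pi)$. If $d=2$, so $m\subseteq\pi$, then any two distinct $2$-spaces of the $3$-space $\pi$ meet in a line by dimension count, giving $\qbinom{3}{1}-1=q^2+q$. If $d=1$, writing $P=m\cap\pi$, we have $m\cap m'\subseteq P$ for every $m'\subseteq\pi$, so the $2$-spaces $m'$ contributing are exactly those in $\pi$ through $P$, giving $\qbinom{2}{1}=q+1$. If $d=0$, no $m'\subseteq\pi$ can meet $m$ nontrivially, so the count is $0$.

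Combined with $k'=q(q+1)\qbinom{n-2}{1}$, the coefficients of $A'(\allone-[W_\pi])=k'\allone-A'([W_\pi])$ become $k'-(q^2+q)=q^2(q+1)\qbinom{n-3}{1}$, $k'-(q+1)=(q+1)\bigl(q\qbinom{n-2}{1}-1\bigr)$, and $k'=q(q+1)\qbinom{n-2}{1}$ in the three respective cases, all visibly divisible by $q+1$. Likewise $L'(\allone-[W_\pi])=-L'([W_\pi])=A'([W_\pi])-k'[W_\pi]$ has coefficients $(q^2+q)-k'=-q^2(q+1)\qbinom{n-3}{1}$, $q+1$, and $0$, again all divisible by $q+1$. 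This gives both containments.

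The argument is essentially a direct geometric counting computation, so there is no structural obstacle; the only point requiring care is the clean case split on $\dim(m\cap\pi)$ and the observation that the factor $(q+1)$ is already built into the regularity constant $k'$. No hypothesis on $n$ from Case (v) is actually invoked, only $\ell\mid q+1$.
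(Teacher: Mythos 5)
Your proposal is correct and follows essentially the same route as the paper: reduce via $q+1 \mid k'$ to examining the coefficients of $A'([W_\pi])$, then compute those coefficients by the case split on $\dim(m\cap\pi)$, obtaining $0$, $q+1$, and $q^2+q$. You simply spell out the dimension counts and the explicit coefficients of $A'(\allone-[W_\pi])$ and $L'(\allone-[W_\pi])$, which the paper leaves as ``it is easy to see,'' and your parenthetical that only $\ell\mid q+1$ is used (not the full Case (v) hypotheses) is accurate.
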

\begin{proof} Since $q+1\mid k'$, it suffices to prove that $A^{\prime}[W_\pi]\in\ell^aM$.
Write $A^{\prime}[W_\pi]=\sum_L c_LL$, where $L$ runs over the $2$-subspaces.
Then it is easy to see that $c_L=0$ if $L$ has zero intersection with $\pi$,
that $c_L=q+1$ if $L$ meets $\pi$ in a $1$-dimensional space, and that
$c_L=q^2+q$ if $L$ is contained in $\pi$. 
\end{proof}

\begin{lemma}\label{uniserialY}
Let $\langle -,-\rangle$ denote the natural dot product on $\overline M$. 
Let $\pi$ be a $3$-subspace and $P$ a $1$-subspace of $\pi$.
Under the current hypotheses on $\ell$, the following hold, where we denote images
in $\overline M$ of elements of $M$ by the same symbols.
\begin{enumerate}
\item[(a)] $\langle\allone, \allone-[W_P]\rangle=0$.
\item[(b)] $\langle\allone, \allone-[W_\pi]\rangle=0$.
\item[(c)] $\langle\allone-[W_P],\allone-[W_\pi]\rangle\neq 0$.
\end{enumerate}
\end{lemma}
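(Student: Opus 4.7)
The plan is to reduce all three statements to explicit $q$-binomial congruences modulo $\ell$ and then invoke the hypothesis $\ell \mid \lfloor (n-1)/2\rfloor$. First I would compute each of the four relevant pairings in $\overline M$ combinatorially, in $\mathbb{Z}$ rather than $\Fell$:
\begin{align*}
\langle \allone,\allone\rangle &= \qbinom{n}{2}, \qquad
\langle \allone,[W_P]\rangle = |W_P| = \qbinom{n-1}{1},\\
\langle \allone,[W_\pi]\rangle &= |W_\pi| = q^2+q+1, \qquad
\langle [W_P],[W_\pi]\rangle = q+1,
\end{align*}
where the last equality uses $P\subset\pi$: the overlap counts $2$-spaces of $\pi$ containing $P$, which is $\qbinom{2}{1}=q+1$.

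Next I would record the reductions modulo $\ell$ that make everything collapse. Since $\ell\mid q+1$ and $\ell\mid \qbinom{n}{1}$, Lemma~\ref{qbinoms}(i) forces $n$ to be even, so $\lfloor(n-1)/2\rfloor=(n-2)/2$ and the hypothesis reads $\ell\mid (n-2)/2$, equivalently $n/2\equiv 1\pmod\ell$. Using $q\equiv-1\pmod\ell$ in $\qbinom{m}{1}=1+q+\cdots+q^{m-1}$ gives $\qbinom{m}{1}\equiv 0$ for $m$ even and $\qbinom{m}{1}\equiv 1$ for $m$ odd; in particular $\qbinom{n-1}{1}\equiv 1$. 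Similarly $\qqbinom{n/2}{1}=1+q^2+\cdots+q^{n-2}\equiv n/2\pmod\ell$, and the identity $\qbinom{n}{2}=\qqbinom{n/2}{1}\qbinom{n-1}{1}$ then yields $\qbinom{n}{2}\equiv n/2\pmod\ell$. Finally $q^2+q+1\equiv 1$ and $q+1\equiv 0$ mod $\ell$.

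Plugging these into the pairings finishes everything. For (a),
\[
\langle\allone,\allone-[W_P]\rangle=\qbinom{n}{2}-\qbinom{n-1}{1}\equiv n/2-1\equiv 0\pmod\ell.
\]
For (b),
\[
\langle\allone,\allone-[W_\pi]\rangle=\qbinom{n}{2}-(q^2+q+1)\equiv n/2-1\equiv 0\pmod\ell.
\]
For (c), expanding and using the same table gives
\[
\langle\allone-[W_P],\allone-[W_\pi]\rangle\equiv n/2-1-1+0\equiv -1\pmod\ell,
\]
which is a unit, so the pairing is nonzero in $\Fell$.

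There is no real obstacle here; the only things to watch are the parity of $n$ (which follows immediately from the standing hypotheses via Lemma~\ref{qbinoms}) and the identification $\lfloor(n-1)/2\rfloor=(n-2)/2$, both of which are used to convert the divisibility assumption into the single congruence $n/2\equiv 1\pmod\ell$ that drives all three conclusions.
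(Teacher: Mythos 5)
Your proof is correct and follows essentially the same route as the paper: compute the four inner products combinatorially, then reduce modulo $\ell$ using $q\equiv-1$, $n$ even, and $n/2\equiv 1\pmod\ell$. The paper's proof states the same four counts and congruences somewhat more tersely; your version just spells out the arithmetic for each of (a), (b), (c) explicitly.
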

\begin{proof} Under the hypothesis on $\ell$, we know that $n$ is even, that
$\qbinom{n}{2}\equiv\frac{n}{2}\equiv 1\pmod\ell$, as $\ell\mid(\frac n2-1)$.
The number of $2$-spaces containing $P$ is 
$\qbinom{n-1}{1}\equiv 1\pmod\ell$. 
The number of $2$-spaces contained in $\pi$ is 
$q^2+q+1\equiv 1\pmod\ell$, and of these there are $q+1$
which contain $P$. All parts of the lemma now follow.
\end{proof}
We claim that  $\dim_{\Fell}(\overline M_a)\geq g+2$. 
To see this we make use of the uniserial structure of $\overline Y$ 
given in Lemma~\ref{uniserial_module}. Since $\ell\mid n$, the permutation module
on points $\Fell^{\LL_1}$ is uniserial with series $\Fell$, $D_1$, $\Fell$.
Let $U_1$ be the submodule of $\overline M$ generated by the elements 
$\allone-[W_Q]$ as $Q$  ranges over $\LL_1$. Then $U_1\subseteq \overline Y$
by Lemma~\ref{uniserialY}(a). Also, $U_1$ is the image of 
$\Fell^{\LL_1}$ under the $\Fell\GL(n,q)$-map sending $Q$ to $\allone-[W_Q]$, so
$U_1$ is uniserial, and from the structures of $\overline Y$ and $\Fell^{\LL_1}$,
we see that $U_1$ has socle series $D_1$, $\Fell$, so is equal to the second socle,
$\soc^2(\overline Y)$.
Thus, from the structure of $\overline Y$, we see that $U_1^\perp\cap\overline Y$
is equal to $\soc^3(\overline Y)$.
Let $U_3$ be the submodule of $\overline M$ generated by the elements
$\allone-[W_\pi]$ as $\pi$ ranges over $\LL_3$. Then $U_3\subseteq\overline Y$ 
by Lemma~\ref{uniserialY}(b), but $U_3\nsubseteq U_1^\perp\cap\overline Y$
by Lemma~\ref{uniserialY}(c). Therefore, from the uniserial structure of
$\overline Y$, it follows that $U_3\supseteq\soc^4(\overline Y)$, so
contains at least the composition factors $D_1$, $F$ (twice) and $D_2$.
Hence $\dim_{\Fell}U_3\geq g+1$. Our claim now follows from 
Lemma~\ref{planelines} and the fact that $\allone\in\Ker(L')$.

We next claim that  $\dim_{\Fell}(\overline M_{2a})\geq f$.  
From the equation 
\begin{equation}
L'(L'-(r+s)I)=-rsI+\mu'J
\end{equation}
we have $\Image(L'-(r+s)I)\subseteq M_{2a}$, so
$\Image(\overline{L'})\subseteq \overline M_{2a}$.
This image is easily seen not to be in the span of $\allone$, hence
from the structure of $\overline M$, must have a composition factor isomorphic
to $D_1$. Also $\allone\in M_{2a}$, so $\overline M_{2a}$ has dimension
at least $1+\dim_{\Fell} D_1=f$. 

Therefore, by Lemma~\ref{eldiv} with $d=a(f+g)$, $h=2$, $b_3=1$, $b_2=f$, $b_1=g+2$,  $a_2=2a$ and  $a_1=a$, 
we obtain
\begin{equation}
e_{2a}=f-1,\quad  e_{a}=g-f+2, \quad e_0=f-1, \quad\text{and $e_i=0$ otherwise.} 
\end{equation}


\section{Grassmann, $A^{\prime}$} 
\subsubsection{Parameters}
\begin{equation}
\begin{aligned}
k'&=q(q+1)\qbinom{n-2}{1},\quad  \mu'=(q+1)^2, \quad \lambda'=\qbinom{n-1}{1}+q^2-2,\\
r_{A^{\prime}}&=q^2\qbinom{n-3}{1}-1=q\qbinom{n-2}{1}-(q+1), \quad s_{A^{\prime}}=-(q+1).
\end{aligned}
\end{equation}

In this section we shall write $r$ for $r_{A'}$ and $s$ for $s_{A'}$. 
The multiplicity of $r$ is  $f=\dim_{\Qell}\tilde S_1=\qbinom{n}{1}-1$ and
that of $s$ is $g=\dim_{\Qell}\tilde S_2=\qbinom{n}{2}-\qbinom{n}{1}$.  

\begin{equation}
\abs{S(\Gamma')}=q(q+1)^{g+1}(q^{2}\qbinom{n-3}{1}-1)^f\qbinom{n-2}{1}.
\end{equation}

\begin{lemma}\label{NumThyAprime}
\begin{enumerate}
\item[(a)] If $n$ is odd then $\gcd(r,s)=1$, $\gcd(k',s)=q+1$ and $\gcd(k',r)=1$.
\item[(b)]If $n$ is even, then $\gcd(r,s)=q+1$, $\gcd(s,k')=q+1$,  and any common prime divisor of $r$ and $k'$ must divide $q+1$.
\end{enumerate}
\end{lemma}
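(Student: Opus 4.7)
The plan is to do the entire lemma as straightforward arithmetic, pivoting on the single identity
\[
q\qbinom{n-2}{1}=r+(q+1),\qquad k'=(q+1)\bigl(r+(q+1)\bigr),
\]
which follows directly from the given expression $r=q\qbinom{n-2}{1}-(q+1)$ and from $k'=q(q+1)\qbinom{n-2}{1}$. Combined with $s=-(q+1)$, this replaces all three gcd problems by gcds involving $q+1$ and $\qbinom{n-2}{1}$.

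First, $\gcd(k',s)$ is immediate in both parities: since $k'=q(q+1)\qbinom{n-2}{1}$ is a multiple of $q+1$, we get $\gcd(k',s)=\gcd(q(q+1)\qbinom{n-2}{1},\,q+1)=(q+1)\gcd(q\qbinom{n-2}{1},1)=q+1$. Next, $\gcd(r,s)=\gcd(r,q+1)$, and reducing $r=q\qbinom{n-2}{1}-(q+1)$ modulo $q+1$ gives $\gcd(r,q+1)=\gcd(\qbinom{n-2}{1},q+1)$. Here I will invoke the standard observation that modulo $q+1$ one has $q\equiv -1$, so $\qbinom{n-2}{1}=1+q+\cdots+q^{n-3}$ reduces to the alternating sum $1-1+1-\cdots$ with $n-2$ terms, which is $0$ when $n-2$ is even (i.e.\ $n$ even) and $1$ when $n-2$ is odd. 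Thus $\gcd(r,s)=q+1$ if $n$ is even and $\gcd(r,s)=1$ if $n$ is odd, giving the $r$--$s$ statements in both (a) and (b).

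For the $k'$--$r$ gcds I use the factorization $k'=(q+1)(r+(q+1))$, so $\gcd(k',r)=\gcd((q+1)(r+(q+1)),r)=\gcd((q+1)^2,r)$. If $n$ is odd, the previous paragraph gives $\gcd(r,q+1)=1$, hence $\gcd((q+1)^2,r)=1$, proving $\gcd(k',r)=1$ in (a). If $n$ is even and a prime $\ell$ divides both $k'$ and $r$, then $\ell\mid(q+1)(r+(q+1))$; if $\ell\nmid q+1$ then $\ell\mid r+(q+1)$, and combined with $\ell\mid r$ this forces $\ell\mid q+1$, a contradiction. Hence every common prime divisor of $k'$ and $r$ divides $q+1$, which is the last claim in (b).

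There is no real obstacle; the whole proof is a couple of lines of congruence arithmetic once the rewrite $k'=(q+1)(r+(q+1))$ and $s=-(q+1)$ are in hand. The only minor point to get right is the case split for $\gcd(\qbinom{n-2}{1},q+1)$, which is settled cleanly by the $q\equiv -1\pmod{q+1}$ trick.
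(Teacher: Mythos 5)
Your proof is correct. The paper states Lemma~\ref{NumThyAprime} without giving a proof, so there is no argument in the text to compare against; your direct congruence computation, pivoting on $s=-(q+1)$, $k'=(q+1)(r+(q+1))$, and $q\equiv -1\pmod{q+1}$, is exactly the natural route. Note, though, that the paper's authors prove the companion Lemma~\ref{NumThyL} (the analogous number-theoretic lemma for the skew-lines Laplacian) by the same style of mod-$\ell$ manipulation of $q$-binomials, so your argument is entirely in the spirit of what they do elsewhere. One tiny stylistic remark: the intermediate step $\gcd(q(q+1)\qbinom{n-2}{1},q+1)=(q+1)\gcd(q\qbinom{n-2}{1},1)$ reads like a general distributive law for gcd, which does not hold; what is actually being used is simply that $q+1$ divides $k'$, hence $\gcd(k',q+1)=q+1$. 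Phrasing it that way avoids any ambiguity. Everything else, including the parity case split for $\gcd(\qbinom{n-2}{1},q+1)$ and the reduction $\gcd(k',r)=\gcd((q+1)^2,r)$, is correct.
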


\subsubsection{$\ell\nmid q+1$}

In this case, $\ell\nmid s$. In the nontrivial case where $\ell\mid k'r$, we know by
Lemma~\ref{NumThyAprime} that $\ell$ divides exactly one of $k'$ and $r$.

Case(i). Suppose $v_\ell(k')=a$, $a>0$, $\ell\nmid r$, $\ell\nmid s$.
Here, $v_\ell(\abs{S(\Gamma')})=a$ and we have $\allone\in M_a$, so
$\dim_{\Fell}\overline M_a\geq 1$.
Therefore, by Lemma~\ref{eldiv} with $d=a$, $h=1$, $b_2=0$, $b_1=1$ and  $a_1=a$, 
we obtain
\begin{equation}
e_{a}=1, \quad e_0=f+g, \quad\text{and $e_i=0$ otherwise.} 
\end{equation}

Case(ii). Suppose $v_\ell(r)=a$, $a>0$, $\ell\nmid k'$, $\ell\nmid s$.
Here, $v_\ell(\abs{S(\Gamma')})=af$ and by Lemma~\ref{eigenspacebound} we
have $\dim_{\Fell}\overline M_a\geq f$. 
Therefore, by Lemma~\ref{eldiv} with $d=af$, $h=1$, $b_2=0$, $b_1=f$ and  $a_1=a$, 
we obtain
\begin{equation}
e_{a}=f, \quad e_0=g+1, \quad\text{and $e_i=0$ otherwise.} 
\end{equation}

\subsubsection{$\ell\mid q+1$}
Since $\ell\mid q+1$, we have for any $m$ that $\ell\mid\qbinom{m}{1}$ if and only of
$m$ is even, so we divide into two cases according to the parity of $n$.

i) $n$ even. Then 
\begin{equation}
k'=q(q+1)^2 h, \quad r=(q+1)(qh-1), \quad s=-(q+1),
\end{equation}
where $h=\qqbinom{\frac{n-2}{2}}{1}$. Note that $\ell\mid h$ if and only if 
$\ell\mid\frac{n-2}{2}$ and that $\gcd(h, qh-1)=1$.

Let $a=v_\ell(q+1)$,  $b=v_\ell(qh-1)$ and  $c=v_\ell(h)$.
Then either $c=0$ or $b=0$.

Note that since $\ell\mid q+1$ and $n$ is even, we have
$\ell\mid\qbinom{n}{2}$ iff $\ell\mid\frac{n}{2}=\frac{n-2}{2}+1$.
But modulo $\ell$, we have $-(\frac{n-2}{2}+1)\equiv -h-1\equiv qh-1$,
so we see that $b=0$ if and only if $\ell\nmid\qbinom{n}{2}$.

a) Suppose $c=0$ and $b=0$.
In this case $\ell\nmid\qbinom{n}{2}$, so we have a decomposition
$M=\Rell\allone \oplus Y$. By Lemma~\ref{Spechtmult} 
the module $\overline Y$ has composition factors $D_1$ (twice), $D_2$, $\Fell$.
From the self-duality of $\overline Y$ and the fact that $\GL(n,q)$ has no fixed
points in $\overline Y$, one sees that $\overline Y$ has socle series $D_1$, $\Fell\oplus D_2$,
$D_1$. By Lemma~\ref{srgeq}, on $Y=\Ker(J)$ we have $(A'-rI)(A'-sI)=0$, which we can write as
\begin{equation}
A'(A'-(r+s)I)=-rsI.
\end{equation}
This shows that $(A'-(r+s)I)(Y)\subseteq M_{2a}\cap Y$, so as $\ell\mid(r+s)$,
we have $\overline{A'}(\overline Y)\subseteq\overline{M_{2a}\cap Y}$.
It is easy to see that $\overline{A'}(\overline Y)$ is nonzero, 
so it has $D_1$ as a composition factor. In particular, $\dim_{\Fell}(\overline{M_{2a}\cap Y})\geq f-1$ and so $\dim_{\Fell}\overline M_{2a}\geq f$, as $\allone\in M_{2a}$. 
By Lemma~\ref{decompmap} applied to $Y$ we know from the
fact that $Y_K=\tilde S_1\oplus\tilde S_2$ that 
$\overline{M_{a}\cap Y}$ must contain the composition factors of $S_1$
and of $S_2$, so has at least the composition factors $D_1$, $\Fell$ and $D_2$.
This shows $\dim_{\Fell}(\overline{M_{a}\cap Y})\geq (f-1)+1+(g-f+1)=g+1$, so
$\dim_{\Fell}\overline M_{a}\geq g+2$.
Now $v_\ell(\abs{S(\Gamma')})=a(f+g)+2a$.
Therefore, by Lemma~\ref{eldiv} with $d=a(f+g)+2a$, $h=2$, $b_3=0$, $b_2=f$,
$b_1=g+2$, $a_2=2a$ and  $a_1=a$, 
we obtain
\begin{equation}
e_{2a}=f, \quad e_a=g-f+2,\quad  e_0=f-1, \quad\text{and $e_i=0$ otherwise.} 
\end{equation}

b) Suppose $b>0$, which implies $c=0$.
Here, $v_\ell(\abs{S(\Gamma')})=(a+b)f+ag+2a$.
\begin{lemma} Let $Y=\Ker(J)$. Then $\Fell\allone\subseteq\overline Y$
and $\overline Y/\Fell\allone$ is uniserial with socle series $D_1$, $D_2$, $D_1$.
\end{lemma}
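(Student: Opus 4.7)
The plan is to follow the template of Lemma~\ref{Ybariv}, which gave exactly the same structural conclusion in Case~(iv).

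The inclusion $\Fell\allone\subseteq\overline Y$ is immediate: one identifies $\overline Y$ with $(\Fell\allone)^\perp$ under the standard inner product on $\overline M$, and the case~(b) hypothesis $b>0$ has already been shown in the preamble to subcases (a) and (b) to be equivalent to $\ell\mid\qbinom{n}{2}=\langle\allone,\allone\rangle$, so $\allone$ is isotropic.

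For the composition factors of $\overline Y/\Fell\allone$, I would use the James filtration with subquotients $S_0,S_1,S_2$ of $\overline M=\Fell^{\LL_2}$ together with Lemma~\ref{Spechtmult}. Under $\ell\mid q+1$ with $n$ even, this gives $[S_1:D_0]=[S_2:D_1]=1$, and $c=0$ rules out condition (ii) of Lemma~\ref{Spechtmult}(c), forcing $[S_2:D_0]=0$. Subtracting the trivial one-dimensional quotient $\overline M/\overline Y\cong\Fell$, the composition factors of $\overline Y/\Fell\allone$ are $D_1$, $D_1$, $D_2$.

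Next, $\overline Y/\Fell\allone$ inherits a non-degenerate $\GL(n,q)$-invariant bilinear form from $\overline M$ (since $(\Fell\allone)^\perp/\Fell\allone$ is the form's radical quotient), so it is self-dual. Because $S_2$ has no trivial composition factor in this case, $S_2\subseteq\overline Y$ and $S_2\cap\Fell\allone=0$; hence $S_2$ embeds into $\overline Y/\Fell\allone$ as a uniserial length-$2$ submodule with socle $D_1$ and top $D_2$ (since $D_2$ is the unique simple quotient of $S_2$), and the cokernel of this embedding is the simple module $D_1$. The main task—the principal obstacle, as in Lemma~\ref{Ybariv}—is to promote this filtration to the socle series. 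A splitting $\overline Y/\Fell\allone\cong S_2\oplus D_1$ or a $D_2$ appearing in the socle would force the head and socle to have inequivalent composition content (one containing $D_2$, the other not), contradicting self-duality since $D_1$ and $D_2$ are non-isomorphic self-dual simples. Hence the socle is simple $D_1$; dually the head is simple $D_1$; and a length-$3$ module with simple socle and simple head containing a uniserial length-$2$ submodule must itself be uniserial, giving the socle series $D_1$, $D_2$, $D_1$ as claimed.
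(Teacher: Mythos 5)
Your proof follows the paper's proof of Lemma~\ref{Ybariv}, which (as you correctly observe) the paper silently reuses here without restating: read the composition factors off Lemma~\ref{Spechtmult}, observe that $S_2$ embeds as a uniserial length-two submodule with socle $D_1$, and finish by self-duality of $\overline Y/\Fell\allone$. So the overall approach is the same; two small points deserve attention. First, for $\ell=2$ the relevant clause of Lemma~\ref{Spechtmult}(c) is (i) rather than (ii); it is ruled out all the same, since $c=0$ with $n$ even forces $n\equiv 0\pmod 4$, so the conclusion $[S_2:D_0]=0$ stands.

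Second, and more substantively, the parenthetical reason you give for excluding $D_2$ from the socle does not quite work as stated. If $D_2$ were a simple submodule of $\overline Y/\Fell\allone$, then by self-duality a copy of $D_2$ would also appear in the head, so head and socle would \emph{both} contain $D_2$ and there is no mismatch of the kind you describe. The correct contradiction is by a length count: such a simple submodule $D_2$ would meet the embedded copy of $S_2$ trivially (since $\soc S_2\cong D_1\not\cong D_2$), so $D_2\oplus S_2$ would be a submodule of length three, hence all of $\overline Y/\Fell\allone$; its head would then contain $D_2$ with multiplicity two, contradicting $[\overline Y/\Fell\allone:D_2]=1$. (Your argument for the splitting $S_2\oplus D_1$, by contrast, is fine: there socle is $D_1\oplus D_1$ and head is $D_1\oplus D_2$, genuinely out of balance.) With this repair the remainder — simple socle $D_1$, simple head $D_1$ by duality, hence uniserial — is correct and spells out what the paper leaves to the phrase ``follows from the self-duality.''
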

We will prove the following bounds. 
\begin{lemma}
\begin{enumerate}
\item $\dim_{\Fell}\overline M_{2a+b}\geq f$. 
\item $\dim_{\Fell}\overline M_{a}\geq g+2$.
\end{enumerate}
\end{lemma}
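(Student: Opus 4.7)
The plan is to mirror the template of Case~(iv) in the $L'$ computation, since the preceding lemma gives the same structural description of $\overline Y$ in both settings. The main tools are (i) the strong-regularity identity $(A'-rI)(A'-sI) = \mu'J$ restricted to $Y = \Ker(J)$; (ii) the Specht-module description of $\overline{M\cap\tilde S_i}$ coming from Lemma~\ref{decompmap}, giving composition factors $\Fell, D_1$ for $i=1$ and $D_1, D_2$ for $i=2$ (the latter because $[S_2:D_0] = 0$, forced by the preceding lemma via $[\overline M : D_0]=2$); and (iii) the homomorphism $\eta: \Rell^{\LL_1} \to M$, $P \mapsto [W_P]$, whose image lies in $M_a$ by Lemma~\ref{ptlines}.

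For bound (2), the plan is to exhibit three subspaces of $\overline M_a$ and sum. First, $\Fell\allone \subseteq \overline M_a$ because $v_\ell(k') = 2a \geq a$. Second, $\Image(\overline\eta) \subseteq \overline M_a$ has dimension $f$ with composition factors $\Fell, D_1$ and socle $D_1$: the kernel of $\overline\eta$ equals $\Fell\allone_{\LL_1}$, since $\sum_P [W_P] = (q+1)\allone \equiv 0 \pmod\ell$ lies in it while $[W_P] - [W_{P'}] \notin \ell M$ does not, and the uniserial structure $\Fell, D_1, \Fell$ of $\Fell^{\LL_1}$ leaves only the bottom trivial as the kernel. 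Third, $\overline{M \cap \tilde S_2} \subseteq \overline M_a$ has composition factors $D_1, D_2$. The intersection $(\Fell\allone + \Image(\overline\eta)) \cap \overline{M \cap \tilde S_2}$ has composition factors in $\{\Fell, D_1\} \cap \{D_1, D_2\} = \{D_1\}$, hence dimension at most $f-1$, so the sum has dimension at least $(f+1) + g - (f-1) = g+2$.

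For bound (1), the SRG identity on $Y$ yields $\overline{A'}(\overline Y) \subseteq \overline M_{2a+b}$ (from $A'(A'-(r+s)I) = -rsI$ on $Y$, using $v_\ell(rs) = 2a+b$ and $v_\ell(r+s) \geq 1$); this will supply the $D_1$ contribution. The new wrinkle, absent in the Laplacian setting, is that $\allone \notin M_{2a+b}$ because $v_\ell(k') = 2a < 2a+b$, so the trivial composition factor must be recovered by a correction. I would set $k'_0 = k'/\ell^{2a}$ and $u = (q+1)/\ell^a$ (units in $\Rell$) and form $x_P := k'_0\ell^a[W_P] - u\allone$; using $A'[W_P] = r[W_P] + (q+1)\allone$ a direct computation gives $A'x_P = k'_0 r \ell^a [W_P]$, whose $\ell$-valuation is $2a+b$, so $x_P \in M_{2a+b}$ and $\bar x_P = -\bar u\, \bar\allone$ is a nonzero scalar multiple of $\bar\allone$. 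Hence $\Fell\allone \subseteq \overline M_{2a+b}$. It remains to identify $\overline{A'}(\overline Y)$ as a copy of $D_1$: the upper bound $\dim \overline{A'}(\overline Y) \leq f-1$ comes from $\Ker(\overline{A'}) \supseteq \overline{M\cap\tilde S_1} + \Fell\allone + \overline{M\cap\tilde S_2} + \Image(\overline\eta)$ (all annihilated mod~$\ell$: the eigenspaces because $\bar r = \bar s = 0$, and $\Image(\overline\eta)$ because $A'[W_P] = r[W_P]+(q+1)\allone$), which has dimension $g+2$ by the same composition-factor intersection bookkeeping used in bound~(2). Since $\overline{A'}$ is manifestly nonzero and $\overline{A'}([m]-[m'])$ is not a scalar multiple of $\bar\allone$ for generic $m\neq m'$, the image $\overline{A'}(\overline Y)$ is neither $0$ nor $\Fell\allone$; the submodule lattice of $\overline Y$, whose socle is $\Fell\allone \oplus D_1$ (the extra $D_1$ comes from $\Image(\overline\eta)\cap\overline Y$, generated by the differences $[W_P]-[W_{P'}]$), then forces $\overline{A'}(\overline Y) \cong D_1$. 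Combined with $\Fell\allone$, this yields $\dim\overline M_{2a+b} \geq 1 + (f-1) = f$. The main obstacle is precisely this last submodule-lattice step: verifying that $\soc(\overline Y) = \Fell\allone \oplus D_1$ so that the only nonzero proper submodules of $\overline Y$ of dimension less than $f$ are $\Fell\allone$ and $D_1$.
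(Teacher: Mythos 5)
Your proposal follows essentially the same route as the paper's proof: the element $x_P$ you construct is a unit multiple of the paper's $(q+1)[W_P]-(qh)^{-1}\allone$, the same strong-regularity identity on $Y$ supplies the $D_1$ inside $\overline M_{2a+b}$, and the same three ingredients ($\Fell\allone$, $\Image(\overline\eta)$, and $\overline{M\cap\tilde S_2}$ via Lemma~\ref{decompmap}) drive bound (2). The one divergence is that the ``submodule-lattice step'' you flag as the main obstacle for bound (1) is actually unnecessary: since the preceding lemma gives $\overline Y/\Fell\allone$ uniserial with socle $D_1$, any submodule of $\overline Y$ not contained in $\Fell\allone$ already has $D_1$ as a composition factor, so there is no need to identify $\overline{A'}(\overline Y)$ as exactly $D_1$ (and hence no need to compute $\soc(\overline Y)$ or bound $\Ker(\overline{A'})$), and the paper's one-line observation suffices.
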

\begin{proof} The restriction of $A'$ to $Y$ satisfies the equation
\begin{equation}
A'(A'-(r+s)I)=-rsI.
\end{equation}
This shows that $(A'-(r+s)I)(Y)\subseteq M_{2a+b}$, so
$(\overline{A'-(r+s)I})(\overline Y)\subseteq \overline M_{2a+b}$
It is easy to see that $(\overline{A'-(r+s)I})(\overline Y)$ is not in the span 
of $\allone$ so it has a composition factor $D_1$. 
By explicit computation we have
\begin{equation}\label{ImAp}
A'([W_p])=(q\qbinom{n-2}{1}-(q+1))[W_p]+(q+1)\allone=
(q+1)(qh-1)[W_p]+(q+1)\allone
\end{equation}
and 
\begin{equation}
A'(\allone)=k'\allone=q(q+1)^2h\allone.
\end{equation}
Therefore, as $h$ is a unit of $\Rell$, we have
$(q+1)[W_p]-(qh)^{-1}\allone\in M_{2a+b}$,
 whence $\Fell\allone\subseteq\overline M_{2a+b}$. This proves (1).

By Lemma~\ref{decompmap} we know that $\overline M_a$ has
at least the composition factors $D_2$ and $D_1$ of $S_2$, and we have
already seen that $\Fell\allone\subseteq\overline M_a$.
We shall show that $\overline M_a$ contains a submodule with 
no $\GL(n,q)$-fixed points but with a trivial composition factor.
This will show that $\overline M_a$ at least two trivial composition factors
and (2) will be proved. The submodule in question is the
$\Fell$-submodule $W$ generated by the $[W_p]$, for $1$-spaces $p$.
It is the image of the homomorphism  $\eta:\Fell^{\LL_1}\to M$
sending a $1$-space to the sum of $2$-spaces containing it.
A direct computation shows that the image is not in the span of $\allone$ and that
the kernel contains the all-one vector of $\Fell^{\LL_1}$.
Since $\Fell^{\LL_1}$ is uniserial with socle series $\Fell$, $D_1$, $\Fell$,
it follows that $W$ is uniserial with socle series $D_1$, $\Fell$.
Equation (\ref{ImAp}) shows that $A'[W_p]\in M_a$, so we are done.
\end{proof}

In view of the above lemma can apply  Lemma~\ref{eldiv} with 
$d=(a+b)f+ag+2a$, $h=2$, $b_3=0$, $b_2=f$,
$b_1=g+2$, $a_2=2a+b$ and  $a_1=a$, 
we obtain
\begin{equation}
e_{2a+b}=f, \quad e_a=g-f+2,\quad  e_0=f-1, \quad\text{and $e_i=0$ otherwise.} 
\end{equation}

c) Suppose $c>0$, which implies $b=0$.
Then $\ell\nmid\qbinom{n}{2}$.
We have a decomposition $M=\Rell\allone\oplus Y$. 
Here, $v_\ell(\abs{S(\Gamma')})=af+ag+2a+c$.
We will prove the following bounds. 
\begin{enumerate}
\item $\dim_{\Fell}\overline M_{2a+c}\geq 1$. 
\item $\dim_{\Fell}\overline M_{2a}\geq f$. 
\item $\dim_{\Fell}\overline M_{a}\geq g+2$.
\end{enumerate}
(1) is by simply computing $A'(\allone)$.
For (2) and (3) we note that the current hypotheses
on $\ell$ imply those of case (v) for $L'$ in \S\ref{Lprime}, and that
as $\ell^{2a}\mid k'$ we have  $M_{2a}(A')=M_{2a}(L')$ and $M_{a}(A')=M_{a}(L')$.

Therefore, by Lemma~\ref{eldiv} with $d=af+ag+2a+c$, $h=3$, $b_4=0$, $b_3=1$, $b_2=f$,
$b_1=g+2$, $a_3=2a+c$, $a_2=2a$ and  $a_1=a$, 
we obtain
\begin{equation}
e_{2a+c}=1,\ e_{2a}=f-1, \ e_a=g-f+2,\  e_0=f-1, \ \text{and $e_i=0$ otherwise.} 
\end{equation}

ii) $n$ odd. Then $\ell\nmid\qbinom{n}{1}$ and $\ell\nmid\qbinom{n-2}{1}$,
Let $a=v_\ell(q+1)$. Then, by Lemma~\ref{NumThyAprime},
$a=v_\ell(k')=v_\ell(s)$ and $\ell\nmid r$.
Here, $v_\ell(\abs{S(\Gamma')})=a+ag$.

a) $\ell\nmid\lfloor\frac{n-1}{2}\rfloor$.
In this case we have a decomposition $M=\Rell\allone\oplus Y$.
Applying Lemma~\ref{eigenspacebound} to $Y$, we have
$\dim_{\Fell}\overline{(M_a\cap Y)}\geq g$, and so since $\allone\in M_a$, we have
$\dim_{\Fell}\overline M_a\geq g+1$.

Therefore, by Lemma~\ref{eldiv} with $d=a(g+1)$, $h=1$, $b_2=0$, 
$b_1=g+1$ and  $a_1=a$, 
we obtain
\begin{equation}
e_{a}=g+1,\quad  e_0=f, \quad\text{and $e_i=0$ otherwise.} 
\end{equation}

b) $\ell\mid\lfloor\frac{n-1}{2}\rfloor$.
Let $a=v_\ell(q+1)$.
We claim that $\dim_{\Fell}\overline M_a\geq g+1$.
By Lemma~\ref{srgeq}, we have $(A'-sI)(A'-rI)=\mu' J$, where $\mu'=(q+1)^2$.
Thus, $A'(A'-rI)=s(A'-rI)+\mu' J$, which shows that
$\Image(A'-rI)\subseteq M_a$ and hence $\Image(\overline{A'-rI})\subseteq \overline M_a$.
Now  on $\overline M$, $\overline{A'}$ has a quadratic minimal polynomial 
with distinct roots $0$ and $r$, so we have an $\Fell\GL(n,q)$-decomposition
\begin{equation}
\overline M=\Ker(\overline{A'-rI})\oplus \Ker\overline{A'}
=\Ker(\overline{A'-rI})\oplus \Image(\overline{A'-rI}).
\end{equation}
The dimension of $\Ker\overline{A'}$ is equal to the algebraic
multiplicity of $0$ as an eigenvalue of $\overline{A'}$, which equals
$g+1$ since  $\ell$ divides both $s$ and $k'$. 
Therefore $\dim_{\Fell} \overline M_a\geq g+1$.
It now follows from Lemma~\ref{eldiv}
with $d=a(g+1)$, $h=1$, $b_2=0$, 
$b_1=g+1$ and  $a_1=a$, 
that
\begin{equation}
e_{a}=g+1,\quad  e_0=f, \quad\text{and $e_i=0$ otherwise.} 
\end{equation}


\section{Skew lines, $L$}
\subsubsection{Parameters} 
\begin{equation}
\begin{aligned}
\mu&=\frac{q^3\qbinom{n-3}{1}\left(\qbinom{n-1}{1}-(q+2)\right)}{q+1}\\
r_L&=q^4\qbinom{n-2}{2}-q^2\qbinom{n-3}{1}=\frac{q^2\qbinom{n-3}{1}\qbinom{n}{1}}{q+1},\\
s_L&=q^4\qbinom{n-2}{2}-q=\frac{q\qbinom{n-1}{1}\left(\qbinom{n-1}{1}-(q+2)\right)}{q+1}.
\end{aligned}
\end{equation}

In this section we shall write $r$ for $r_{L}$ and $s$ for $s_{L}$.
The multiplicity of $r$ is  $f=\dim_{\Qell}\tilde S_1=\qbinom{n}{1}-1$ and
that of $s$ is $g=\dim_{\Qell}\tilde S_2=\qbinom{n}{2}-\qbinom{n}{1}$.  

\begin{equation}
\abs{K(\Gamma)}=q^{2f+g}\frac{\left(\frac{\qbinom{n}{1}\qbinom{n-3}{1}}{q+1}\right)^f
\left(\frac{\qbinom{n-1}{1}\left(\qbinom{n-1}{1}-(q+2)\right)}{q+1}\right)^g}{\qbinom{n}{2}}
\end{equation}

\begin{lemma}\label{NumThyL}
\begin{enumerate}
\item[(a)] If $n$ is odd then no prime $\ell\neq p$ can divide both $r$ and $s$.
\item[(b)]If $n$ is even, then any common prime divisor of $r$ and $s$ must
divide $q(q+1)$.
\item[(c)] $\qbinom{n-1}{1}$ divides $s(q+1)$.
\end{enumerate}
\end{lemma}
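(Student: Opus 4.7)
The plan is to reduce all three parts of Lemma~\ref{NumThyL} to one clean algebraic identity between $r$ and $s$, after which parts~(a) and~(b) will follow from the standard gcd formula for $q$-binomial coefficients and part~(c) will be visible directly from the given formula for $s$. The first thing I would do is establish
\[
r-s \;=\; q\qbinom{n-2}{1}.
\]
To prove this I would clear the $(q+1)$ denominators in the rational expressions for $r$ and $s$ and simplify using the two elementary identities $\qbinom{n}{1} = 1 + q\qbinom{n-1}{1}$ and $\qbinom{n-1}{1} = q^2\qbinom{n-3}{1}+q+1$; together they collapse $(r-s)(q+1)$ to $q^2\qbinom{n-3}{1} + q\qbinom{n-1}{1}$, which factors as $q(q+1)\qbinom{n-2}{1}$ once one substitutes $\qbinom{n-1}{1} = q\qbinom{n-2}{1}+1$ and $q\qbinom{n-3}{1} = \qbinom{n-2}{1}-1$.

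Now suppose $\ell\neq p$ is a common prime divisor of $r$ and $s$. Since $\ell\nmid q$, the identity above forces $\ell \mid \qbinom{n-2}{1}$. On the other hand, $\ell\mid r$ implies $\ell\mid r(q+1) = q^2\qbinom{n-3}{1}\qbinom{n}{1}$, and hence $\ell \mid \qbinom{n-3}{1}\qbinom{n}{1}$. The relation $\qbinom{n-2}{1} = 1 + q\qbinom{n-3}{1}$ immediately gives $\gcd(\qbinom{n-2}{1},\qbinom{n-3}{1}) = 1$, so $\ell$ must divide $\qbinom{n}{1}$. Thus $\ell \mid \gcd(\qbinom{n-2}{1},\qbinom{n}{1})$.

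To finish I would compute this last gcd by combining the classical identity $\gcd(q^a-1,q^b-1) = q^{\gcd(a,b)}-1$ with $(q-1)\qbinom{m}{1} = q^m-1$; together they yield $\gcd(\qbinom{n-2}{1},\qbinom{n}{1}) = (q^{\gcd(n,2)}-1)/(q-1)$, which equals $1$ when $n$ is odd and $q+1$ when $n$ is even. Consequently no common prime divisor $\ell\neq p$ can exist when $n$ is odd, proving~(a); and for $n$ even any such $\ell$ must divide $q+1$, so it divides $q(q+1)$, proving~(b). Part~(c) is immediate from the formula $s(q+1) = q\qbinom{n-1}{1}(\qbinom{n-1}{1}-(q+2))$, in which $\qbinom{n-1}{1}$ appears as an explicit factor. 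The only real obstacle is spotting the clean identity $r-s = q\qbinom{n-2}{1}$; every subsequent step is a short deduction from elementary divisibility properties of $q$-binomials.
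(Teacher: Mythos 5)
Your proof is correct, and it follows a genuinely different and cleaner route than the paper's. The paper argues by contradiction with direct modular arithmetic: assuming $\ell$ divides both $r$ and $s$, it deduces $\ell\mid\qbinom{n}{1}$, then manipulates $q^3\qbinom{n-2}{2}-1$ modulo $\ell$ to reach a contradiction (for $n$ odd), with a parallel argument for $n$ even. Your approach instead isolates the single identity $r-s = q\qbinom{n-2}{1}$, which immediately forces any common prime divisor $\ell\neq p$ of $r$ and $s$ to divide $\qbinom{n-2}{1}$; combining with $\ell\mid\qbinom{n-3}{1}\qbinom{n}{1}$ and the coprimality of consecutive $q$-binomials pushes $\ell$ into $\gcd(\qbinom{n-2}{1},\qbinom{n}{1})$, whose prime divisors are those of $\qbinom{\gcd(n,2)}{1}$. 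This exposes the real mechanism (a and b are the two values of $\gcd(n,2)$) and actually proves the slightly stronger fact that the common divisors lie in $q+1$, not merely $q(q+1)$. I verified the identity: $(r-s)(q+1) = q^2\qbinom{n-3}{1}\qbinom{n}{1} - q\qbinom{n-1}{1}(\qbinom{n-1}{1}-q-2)$ does indeed collapse to $q^2\qbinom{n-3}{1}+q\qbinom{n-1}{1} = q(q+1)\qbinom{n-2}{1}$ exactly as you describe. One side remark: the identity follows even more directly from the form $r_L = k-(-q^2\qbinom{n-3}{1}) = q^4\qbinom{n-2}{2}+q^2\qbinom{n-3}{1}$ (the displayed formula in the parameters block has a sign typo writing $-q^2\qbinom{n-3}{1}$; the quotient form $r_L = q^2\qbinom{n-3}{1}\qbinom{n}{1}/(q+1)$ that you actually use is the correct one, and the two must agree with the plus sign), since then $r-s = q^2\qbinom{n-3}{1}+q = q(q\qbinom{n-3}{1}+1) = q\qbinom{n-2}{1}$ with no denominator-clearing needed. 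Part (c) is handled identically to the paper.
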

\begin{proof}

(a) Suppose $\ell\neq p$ divides both $r$ and $s$. Then $\ell$ divides both
$(\qbinom{n-3}{1}/(q+1))\qbinom{n}{1}$ and
$q^3\qbinom{n-2}{2}-1=q^3\qbinom{n-2}{1}(\qbinom{n-3}{1}/(q+1))-1$.
Obviously, $\ell$ cannot divide  $\qbinom{n-3}{1}/(q+1)$, so $\ell$ must divide
$\qbinom{n}{1}=q^2\qbinom{n-2}{1}+(q+1)$. Hence 
$q^3\qbinom{n-2}{1}\equiv -q(q+1) \pmod\ell$ and so
$$
0\equiv q^3\qbinom{n-2}{1}(\qbinom{n-3}{1}/(q+1))-1\equiv -q\qbinom{n-3}{1}-1
\equiv -\qbinom{n-2}{1} \pmod\ell,
$$
a contradiction since $\ell$ divides $q^3\qbinom{n-2}{1}(\qbinom{n-3}{1}/(q+1))-1$.

(b) Suppose that $\ell$ divides $r$ and $s$, but $\ell$ does not divide $q(q+1)$.
Then $\ell$ divides $(\qbinom{n}{1}/(q+1))\qbinom{n-3}{1}$ and
$q^3(\qbinom{n-2}{1}/(q+1))\qbinom{n-3}{1}-1$ so, obviously, $\ell$ does not divide
$\qbinom{n-3}{1}$. Hence $\ell$ divides $\qbinom{n}{1}=q^2\qbinom{n-2}{1}+q+1$.
Thus $q^3\qbinom{n-2}{1}\equiv-q(q+1)\pmod\ell$, and
it follows that $\ell$ divides $-q\qbinom{n-3}{1}-1=-\qbinom{n-2}{1}$.
But then $\ell$ divides $\qbinom{n-2}{1}/(q+1)$, which contradicts the fact that
$\ell$ divides $q^3(\qbinom{n-2}{1}/(q+1))\qbinom{n-3}{1}-1$.

(c) This is clear from the factorization of $s$ given at the beginning of this section.
\end{proof}
\subsubsection{$\ell\nmid q+1$}
By Lemma~\ref{NumThyL}, $\ell$ can divide at most one of $r$ and $s$, so
we consider the two nontrivial cases in turn.

Case(i) $\ell\mid r$, $\ell\nmid s$.

a) $\ell\mid\qbinom{n}{1}$ and  $\ell\nmid\qbinom{n-3}{1}$.
Let  $a=v_\ell(\qbinom{n}{1})$. Then  $a=v_\ell(\qbinom{n}{2})$, and
$v_\ell(\abs{K(\Gamma)})=a(f-1)$. By Lemma~\ref{eigenspacebound},
we have $\dim_{\Fell}(\overline M_a)\geq f$.
Therefore, by Lemma~\ref{eldiv} with $d=a(f-1)$, $h=1$, $b_2=1$, 
$b_1=f$ and  $a_1=a$, 
we obtain
\begin{equation}
e_{a}=f-1,\quad  e_0=g+1, \quad\text{and $e_i=0$ otherwise.} 
\end{equation}

b) $\ell\mid\qbinom{n}{1}$ and  $\ell\mid\qbinom{n-3}{1}$.
Note that we must have $\ell\mid q^2+q+1$.
Let $a=v_\ell(\qbinom{n}{1})$, $b=v_\ell(\qbinom{n-3}{1})$,

Here, $v_\ell(\qbinom{n}{2})=a$, so $v_\ell(\abs{K(\Gamma)})=a(f-1)+bf$.
By Lemma~\ref{eigenspacebound} we have 
$\dim_{\Fell}\overline M_{a+b}\geq f$.
We claim $\dim_{\Fell}\overline M_b\geq f+1$. 

From Lemma~\ref{srgeq} we have
$L(L-(s+r)I)=-rsI+\mu J$, which shows that
$\Image(L-(s+r)I)\subseteq M_b$, hence 
$\Image(\overline{L-sI})\subseteq \overline M_b$.
Now on $\overline M$, $\overline L$ has
quadratic minimal polynomial with eigenvalues $s$ and $0$,
which gives the decomposition of $\Fell\GL(n,q)$-modules
\begin{equation}
\overline M=\Ker(\overline{L})\oplus\Ker(\overline{L-sI})=\Image(\overline{L-sI})\oplus\Ker(\overline{L-sI}).
\end{equation}
The dimension of $\Ker\overline{L}$ is equal to the algebraic
multiplicity of $0$ as an eigenvalue of $\overline{L}$, which equals
$f+1$ since  $\ell$ divides both $r$ and $k$. 
We have shown that $\dim_{\Fell} \overline M_b\geq f+1$.

Therefore, by Lemma~\ref{eldiv} with $d=a(f-1)+bf$, $h=2$, $b_3=1$, $b_2=f$, 
$b_1=f+1$, $a_2=a+b$ and  $a_1=b$, 
we obtain
\begin{equation}
e_{a+b}=f-1,\quad  e_b=1, \quad  e_0=g, \quad\text{and $e_i=0$ otherwise.} 
\end{equation}

c) $\ell\nmid\qbinom{n}{1}$ and $\ell\mid\qbinom{n-3}{1}$.
Under the present assumptions, we have $\ell\nmid\qbinom{n}{2}$,
so we have a decomposition $M=\Rell\allone\oplus Y$ as $\Rell\GL(n,q)$-modules.
Let $a=v_\ell(\qbinom{n-3}{1})$. Then
$v_\ell(\abs{K(\Gamma)})=af$. Using Lemma~\ref{eigenspacebound}
applied to $Y$, we see that $\dim_\Fell(\overline{M_a\cap Y})\geq f$,
so $\dim_\Fell(\overline M_a)\geq f+1$. 
Therefore, by Lemma~\ref{eldiv} with $d=af$, $h=1$, $b_2=1$, $b_1=f+1$ and
$a_1=a$, 
we obtain
\begin{equation}
e_{a}=f,\quad  e_0=g, \quad\text{and $e_i=0$ otherwise.} 
\end{equation}

Case (ii) $\ell\mid s$, $\ell\nmid r$.

a) $\ell\nmid\qbinom{n-1}{1}$.
Let $a=v_\ell(s)$. Then since $\ell\nmid\qbinom{n}{2}$, we have
$v_\ell(\abs{K(\Gamma)})=ag$. Also $M=\Rell\allone\oplus Y$.
Using Lemma~\ref{eigenspacebound} applied to $Y$, we see that 
$\dim_\Fell(\overline{M_a\cap Y})\geq g$, so 
$\dim_\Fell(\overline M_a)\geq g+1$. 
Therefore, by Lemma~\ref{eldiv} with $d=ag$, $h=1$, $b_2=1$, $b_1=g+1$ and
$a_1=a$, 
we obtain
\begin{equation}
e_{a}=g,\quad  e_0=f, \quad\text{and $e_i=0$ otherwise.} 
\end{equation}

b) $\ell\mid\qbinom{n-1}{1}$.
Let $a=v_\ell(\qbinom{n-1}{1})$.  Then $v_\ell(s)=a+b$, where $b=v_\ell(\mu)$.
Then $v_\ell(\abs{K(\Gamma)})=(a+b)g-a$.

If $b=0$, then by Lemma~\ref{eigenspacebound}, $\dim_{\Fell}(\overline M_a)\geq g$.
Therefore, by Lemma~\ref{eldiv} with $d=a(g-1)$, $h=1$, $b_2=1$, $b_1=g$ and 
$a_1=a$, 
we obtain
\begin{equation}
e_{a}=g-1,\quad  e_0=f+1, \quad\text{and $e_i=0$ otherwise.} 
\end{equation}

Suppose $b>0$. Here we have $v_\ell(\abs{K(\Gamma)})=(a+b)g-a$.
By Lemma~\ref{eigenspacebound} we have 
$\dim_{\Fell}\overline M_{a+b}\geq g$,  and we claim that
$\dim_{\Fell}\overline M_{b}\geq g+1$.
By Lemma~\ref{srgeq}, we have $L(L-rI)=s(L-rI)+\mu J$, which shows that
$\Image(L-rI)\subseteq M_b$, hence $\Image(\overline{L-rI})\subseteq\overline M_b$. 
The same equation also shows
that on $\overline M$, $\overline L$ has a quadratic minimal polynomial
with distinct roots $r$ and $0$, so $\overline M=\Ker(\overline{L-rI})\oplus
\Ker(\overline L)$.
The algebraic multiplicity
of $0$ is $g+1$, the sum of the multiplicities of  $s$ and $0$
as eigenvalues of $L$, so $\dim_{\Fell}\Ker(\overline L)=g+1$.
Then, from the decomposition above, we have $\dim_{\Fell}\Image(\overline {L-rI})=g+1$,
and our claim follows.
Therefore, by Lemma~\ref{eldiv} with $d=(a+b)g-a$, $h=2$, $b_3=1$, $b_2=g$, $b_1=g+1$, 
$a_2=a+b$ and $a_1=b$, 
we obtain
\begin{equation}
e_{a+b}=g-1,\quad  e_b=1, \quad  e_0=f, \quad\text{and $e_i=0$ otherwise.} 
\end{equation}

\subsubsection{$\ell\mid q+1$}
As $\ell\mid q+1$, we have for any $m$,  $\ell\mid\qbinom{m}{1}$ if and only if
$m$ is even, so we consider two cases according to the parity of $n$.

Case (i) $n$ even.
\begin{lemma} $\ell\mid r$ iff $\ell\mid\frac{n}{2}$ iff $\ell\mid s$.
\end{lemma}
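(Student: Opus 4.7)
The plan is to reduce both conditions ($\ell\mid r$ and $\ell\mid s$) to explicit congruences mod $\ell$ by using the factorizations of $q$-binomial coefficients given in Lemma~\ref{qbinoms}, together with the standing assumption $\ell\mid q+1$ (so that $q\equiv -1$ and $q^2\equiv 1\pmod\ell$). Since $n$ is even, $n-1$ and $n-3$ are odd, so Lemma~\ref{qbinoms}(i) gives $\ell\nmid\qbinom{n-1}{1}$ and $\ell\nmid\qbinom{n-3}{1}$; also $\ell\nmid q$. Thus in each formula only one factor can contribute to the $\ell$-valuation.

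For $r=r_L=q^2\qbinom{n-3}{1}\qbinom{n}{1}/(q+1)$, I would apply Lemma~\ref{qbinoms}(ii) to write $\qbinom{n}{1}/(q+1)=\qqbinom{n/2}{1}=1+q^2+\cdots+q^{n-2}$. Reducing mod $\ell$ using $q^2\equiv 1$ gives $\qqbinom{n/2}{1}\equiv n/2\pmod\ell$, so $\ell\mid r$ iff $\ell\mid n/2$.

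For $s=s_L=q\qbinom{n-1}{1}(\qbinom{n-1}{1}-(q+2))/(q+1)$, the key step (and the part that takes a little work) is to factor a $(q+1)$ out of $\qbinom{n-1}{1}-(q+2)$. Since $n-1$ is odd, I would group the terms of $\qbinom{n-1}{1}$ as
\begin{equation*}
\qbinom{n-1}{1}=1+q(1+q)+q^3(1+q)+\cdots+q^{n-3}(1+q)=1+q(q+1)\qqbinom{(n-2)/2}{1}.
\end{equation*}
Subtracting $q+2$ and factoring yields
\begin{equation*}
\qbinom{n-1}{1}-(q+2)=(q+1)\bigl(q\,\qqbinom{(n-2)/2}{1}-1\bigr),
\end{equation*}
so $s=q\qbinom{n-1}{1}\bigl(q\,\qqbinom{(n-2)/2}{1}-1\bigr)$. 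Then reducing mod $\ell$ with $q\equiv-1$, $q^2\equiv 1$ gives $q\,\qqbinom{(n-2)/2}{1}-1\equiv-(n-2)/2-1=-n/2\pmod\ell$, so $\ell\mid s$ iff $\ell\mid n/2$.

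The main obstacle is spotting the factorization of $\qbinom{n-1}{1}-(q+2)$ by $q+1$; once that is in hand, both halves of the lemma reduce to the same mod-$\ell$ count of terms in a $q^2$-binomial, namely $n/2$. The three stated equivalences then follow at once.
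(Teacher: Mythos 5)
Your proof is correct and takes essentially the same approach as the paper: use Lemma~\ref{qbinoms} to write the relevant factors as $q^2$-binomial coefficients, then reduce modulo $\ell$ using $q\equiv -1$, $q^2\equiv 1$. The only difference is cosmetic: the paper factors $\qbinom{n-1}{1}-(q+2)=(q+1)\bigl(q^3\qqbinom{(n-4)/2}{1}+(q-1)\bigr)$ while you group differently to get $(q+1)\bigl(q\,\qqbinom{(n-2)/2}{1}-1\bigr)$, but these two expressions are equal and give the same congruence $-n/2\pmod\ell$.
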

\begin{proof} As $n$ is even, $\ell$ divides $\qbinom{n}{1}$ and $\qbinom{n-2}{1}$
but divides neither $\qbinom{n-1}{1}$ nor  $\qbinom{n-3}{1}$.
Since $\ell$ divides $q+1$ and  $r=q^2\qbinom{n-3}{1}\qqbinom{n/2}{1}$, it follows that $\ell$ divides $r$ if and only if $\ell$ divides $n/2$.
Since 
$$
s=q\qbinom{n-1}{1}(\qbinom{n-1}{1}-(q+2))/(q+1)=q\qbinom{n-1}{1}(q^3\qqbinom{(n-4)/2}{1}+(q-1)),
$$
and $q\equiv -1\pmod\ell$, we see that $\ell$ divides $s$ if and only if
$\ell$ divides $-(n-4)/2-2=-n/2$.
\end{proof}

Let $a=v_\ell(r)$, $b=v_\ell(s)$. We can assume $a$, $b>0$, or else
$v_\ell(\abs{K(\Gamma)})=0$. Then $v_\ell(\qbinom{n}{2})=a$ and we have
$v_\ell(\abs{K(\Gamma)})=af+bg-a$.

We claim (1) $\dim_{\Fell}\overline M_{a+b}\geq f$ and (2) $\dim_{\Fell}\overline M_b\geq g+1$.
To prove (1), we observe that the equation
$L(L-(r+s)I)=-rsI+\mu J$ implies that $(L-(r+s)I)(Y)\subseteq M_{a+b}$. It is easy
to see that $\overline L(\overline Y)$ is not in the span of $\allone$, so
contains a nontrivial composition factor. Since $[S_2:D_1]\neq 0$ we see
that $\overline L(\overline Y)$ must have a composition factor $D_1$, whose
dimension is $f-1$. Also $\Fell\allone\subseteq \overline M_{a+b}$ so (1) is proved.   
For (2), we note that $S_2$ has composition factors $D_2$ and $D_1$, so
by Lemma~\ref{decompmap}, $\overline{M\cap\tilde S_2}$ also has 
these composition factors, so $\overline M_b$ has at least these composition
factors. In addition we have  $\Fell\allone\subseteq \overline M_b$, so 
$\dim_{\Fell}\overline M_b\geq g+1$.
Therefore, by Lemma~\ref{eldiv} with $d=af+bg-a$, $h=2$, $b_3=1$, $b_2=f$, $b_1=g+1$,
$a_2=a+b$ and $a_1=b$, 
we obtain
\begin{equation}
e_{a+b}=f-1,\quad  e_b=g-f+1,\quad  e_0=f, \quad\text{and $e_i=0$ otherwise.} 
\end{equation}

Case (ii) $n$ odd. 
Let $a=v_\ell(r)$, $b=v_\ell(s)$. 
We can assume $v_\ell(\abs{K(\Gamma)})>0$.
By Lemma~\ref{NumThyL}, exactly one of $a$ and $b$ is nonzero.

a) $\ell\nmid\frac{n-1}{2}$. In this case $\ell\nmid\qbinom{n}{2}$, so
$v_\ell(\abs{K(\Gamma)})=af+bg$ and we have a decomposition
$M=\Rell\allone\oplus Y$.

If $a>0$, $b=0$, then from Lemma~\ref{eigenspacebound}, we have
$\dim_{\Fell}\overline{M_a\cap Y}\geq f$, so $\dim_{\Fell}\overline{M_a}\geq f+1$.
Therefore, by Lemma~\ref{eldiv} with $d=af+bg$, $h=1$, $b_2=1$, $b_1=f+1$ and $a_1=a$, 
we obtain
\begin{equation}
e_{a}=f,\quad  e_0=g, \quad\text{and $e_i=0$ otherwise.} 
\end{equation}

If $a=0$, $b>0$, then from Lemma~\ref{eigenspacebound}, we have
$\dim_{\Fell}\overline{M_b\cap Y}\geq g$,  so $\dim_{\Fell}\overline{M_b}\geq g+1$. 
Therefore, by Lemma~\ref{eldiv} with $d=af+bg$, $h=1$, $b_2=1$, $b_1=g+1$ and $a_1=b$, 
we obtain
\begin{equation}
e_{b}=g,\quad  e_0=f, \quad\text{and $e_i=0$ otherwise.} 
\end{equation}

b) $\ell\mid\frac{n-1}{2}$. 
Here we have $b=v_\ell(s)=v_\ell\left(\frac{\qbinom{n-1}{1}}{q+1}\right)=v_\ell(\qbinom{n}{2})$.
We are assuming that $\ell\mid\frac{n-1}{2}$, so since $\ell\mid q+1$,
we have $\qqbinom{\frac{n-1}{2}}{1}\equiv\frac{n-1}{2}\pmod\ell$, so 
$\ell\mid\qqbinom{\frac{n-1}{2}}{1}=\frac{\qbinom{n-1}{1}}{q+1}$
and we have $b>0$. Then by Lemma~\ref{NumThyL} $a=0$.
Then $v_\ell(\abs{K(\Gamma)})=bg-b$.
By Lemma~\ref{eigenspacebound}, we have $\dim_{\Fell}\overline M_b\geq g$.
Therefore, by Lemma~\ref{eldiv} with $d=bg-b$, $h=1$, $b_2=1$, $b_1=g$ and $a_1=b$, 
we obtain
\begin{equation}
e_{b}=g-1,\quad  e_0=f+1, \quad\text{and $e_i=0$ otherwise.} 
\end{equation}


\section{Skew lines, $A$}
\subsubsection{Parameters} 
\begin{equation}
\begin{aligned}
k&=q^4\qbinom{n-2}{2}, \\
\mu&=q^3\qbinom{n-3}{1}
\left(\frac{q^{n-1}-q^2-q+1}{q^2-1}\right)\\
&=q^3\qbinom{n-3}{1}
\left(\frac{\qbinom{n-1}{1}-(q+2)}{q+1}\right),\\
r_A&=-q^2\qbinom{n-3}{1},\quad s_A=q.
\end{aligned}
\end{equation}

In this section we shall write $r$ for $r_{A}$ and $s$ for $s_{A}$.
The multiplicity of $r$ is  $f=\dim_{\Qell}\tilde S_1=\qbinom{n}{1}-1$ and
that of $s$ is $g=\dim_{\Qell}\tilde S_2=\qbinom{n}{2}-\qbinom{n}{1}$.  

\begin{equation}
\begin{aligned}
\abs{S(\Gamma)}&= q^{4+2f+g}\qbinom{n-2}{2}(\qbinom{n-3}{1})^f\\
&=q^{4+2f+g}\left(\frac{\qbinom{n-2}{1}(\qbinom{n-3}{1})^{f+1}}{q+1}\right).
\end{aligned}
\end{equation}

\subsubsection{$\ell\nmid q+1$}
If  $\ell\nmid\qbinom{n-2}{1}$ and $\ell\nmid\qbinom{n-3}{1}$
the $\ell$-part of $S(\Gamma)$ is trivial, so we consider the
nontrivial cases.

Case (i) $\ell\mid\qbinom{n-2}{1}$, $\ell\nmid\qbinom{n-3}{1}$.
Let $a=v_\ell(\qbinom{n-2}{2})$. Then as we have $\allone\in M_a$,  
we have $\dim_{\Fell}(\overline M_a)\geq 1$ and by
Lemma~\ref{eldiv} with $d=a$, $h=1$, $b_2=0$, $b_1=1$ and $a_1=a$, 
we obtain
\begin{equation}
e_{a}=1,\quad  e_0=f+g, \quad\text{and $e_i=0$ otherwise.} 
\end{equation}

Case (ii) $\ell\nmid\qbinom{n-2}{1}$, $\ell\mid\qbinom{n-3}{1}$. 
Let $a=v_\ell(\qbinom{n-3}{1})$. We claim  $\dim_{\Fell}\overline M_a\geq f+1$.
Suppose first that $\ell\nmid\qbinom{n}{1}$.
Then $\ell\nmid\qbinom{n}{2}$, so we have a decomposition $M=\Rell\allone\oplus Y$.
The $r$-eigenspace of $A$ is contained in $Y_{\Qell}$, so 
$f\leq\dim_{\Fell}\overline{M_a\cap Y}$. Also $\allone\in M_a$, so the claim 
is true in this case. Now assume that $\ell\mid\qbinom{n}{1}$. In this case,
$\Fell^{\LL_1}$ is uniserial with its unique simple submodule
spanned by the all-one vector $\allone_1$ of $\Fell^{\LL_1}$. The map 
$\eta:\Fell^{\LL_1}\to M$ maps $\allone_1$ to $(q+1)\allone\neq 0$,
so it is an injective map. Its $(f+1)$-dimensional image is the span
of the elements $[W_p]$. Direct computation shows that
$A([W_p])= q^2\qbinom{n-3}{1}(\allone-[W_p])$, so $\Image(\eta)\subseteq M_a$.
Therefore, by Lemma~\ref{eldiv} with $d=a(f+1)$, $h=1$, $b_2=0$, $b_1=f+1$ and $a_1=a$, 
we obtain
\begin{equation}
e_{a}=f+1,\quad  e_0=g, \quad\text{and $e_i=0$ otherwise.} 
\end{equation}

\subsubsection{$\ell\mid q+1$}

Case (i) $\ell\mid\qbinom{n-2}{1}$, but $\ell\nmid\qqbinom{\frac{n-2}{2}}{1}$.
In this case $\ell\nmid\qbinom{n-2}{2}$ and $\ell\nmid\qbinom{n-3}{1}$, so
the $\ell$-part of $S(\Gamma)$ is trivial.

Case (ii) $\ell\mid\qbinom{n-2}{1}$, and $\ell\mid\qqbinom{\frac{n-2}{2}}{1}$.
In this case, $\ell\nmid \qbinom{n-3}{1}$. Let $a=v_\ell(\qbinom{n-2}{2})$.
Then $\allone\in M_a$, so by Lemma~\ref{eldiv} with $d=a$, $h=1$, $b_2=0$, $b_1=1$ and 
$a_1=a$, 
we obtain
\begin{equation}
e_{a}=1,\quad  e_0=f+g, \quad\text{and $e_i=0$ otherwise.} 
\end{equation}

Case (iii) $\ell\mid\qbinom{n-3}{1}$, but $\ell\nmid\qqbinom{\frac{n-3}{2}}{1}$.
Let $a=v_\ell(\qbinom{n-3}{1})$. Then  $a=v_\ell(q+1)$ and so
$\ell\nmid\qbinom{n-2}{2}=\frac{\qbinom{n-2}{1}\qbinom{n-3}{1}}{q+1}$. We have $f\leq\dim_{\Fell}\overline M_a$ by Lemma~\ref{eigenspacebound}. 
Therefore, by Lemma~\ref{eldiv} with $d=af$, $h=1$, $b_2=0$, $b_1=f$ and $a_1=a$, 
we obtain
\begin{equation}
e_{a}=f,\quad  e_0=g+1, \quad\text{and $e_i=0$ otherwise.} 
\end{equation}

Case (iv) $\ell\mid\qbinom{n-3}{1}$, and $\ell\mid\qqbinom{\frac{n-3}{2}}{1}$.
Let $a=v_\ell(q+1)$ and $b=v_\ell(\frac{\qbinom{n-3}{1}}{q+1})$.
Then $b=v_\ell(\qbinom{n-2}{2})$, so $\allone\in M_b$, and 
$\Fell\allone\subseteq\overline M_b$. Now $\ell\mid\frac{n-3}{2}$, so $\ell\nmid\frac{n-1}{2}$, and it follows that $\ell\nmid\qbinom{n}{2}$.
Thus, $M=\Rell\allone\oplus Y$, where $Y_\Qell$ contains
the whole $r$-eigenspace in $M_\Qell$.
As $v_\ell(r)=a+b$ it follows from Lemma~\ref{eigenspacebound} that 
$f\leq\dim_{\Fell}\overline{M_{a+b}\cap Y}$. 
As $\overline M=\Fell\allone\oplus\overline Y$, and $\allone\in M_{b}$, we also have
$\dim_{\Fell}\overline M_b\geq 1+f$.
Therefore, by Lemma~\ref{eldiv} with $d=f(a+b)+b$, $h=2$, $b_3=0$,
$b_2=f$, $b_1=f+1$, $a_2=a+b$ and $a_1=b$, 
we obtain
\begin{equation}
e_{a+b}=f,\quad e_{b}=1,\quad  e_0=g, \quad\text{and $e_i=0$ otherwise.} 
\end{equation}

\bibliographystyle{amsplain}
\bibliography{grassmannbib}

\section{Appendix}
The two tables in this appendix show the $\Fell\GL(n,q)$-submodule structure of the
permutation module $\overline M$, according to the relation of $\ell$, $q$ and $n$.

\begin{table}[H]
\begin{tabular}{|c|c|c|c|}
\hline
&&$\ell\nmid\qbinom{n-2}{1}$&$\ell\mid\qbinom{n-2}{1}$\\
\hline
\multirow{2}{*}{$\ell\nmid q+1$}&$\ell\nmid\qbinom{n-1}{1}$ &$\begin{matrix}S_2=D_2\\
{\overline M}=\Fell\oplus D_1\oplus D_2\end{matrix}$ &$\begin{matrix}S_2=D_2+D_1\\
{\overline M}=\Fell\oplus{\begin{matrix}\\D_1\\D_2\\D_1\\\end{matrix}
  }\end{matrix}$ \\
\cline{2-4} 
 &$\ell\mid\qbinom{n-1}{1}$&$\begin{matrix}S_2=D_2+\Fell\\
{\overline M}=D_1\oplus\begin{matrix}\Fell\\D_2\\\Fell\end{matrix}
\end{matrix}$ 
& N/A\\
\hline
\hline
\multirow{2}{*}{$\ell\mid q+1$}&$\ell\nmid\lfloor\frac{n-1}{2}\rfloor$&$\begin{matrix}S_2=D_2\\{\overline M}=\Fell\oplus D_1\oplus D_2\end{matrix}$ & N/A \\
\cline{2-4}
 &$\ell\mid\lfloor\frac{n-1}{2}\rfloor$&$\begin{matrix}S_2=D_2+\Fell\\
{\overline M}=D_1\oplus\begin{matrix}\Fell\\D_2\\\Fell\end{matrix}
\end{matrix}$  & N/A\\
\hline
\hline
\end{tabular}
\caption*{Table A. $\ell\nmid\left[\begin{smallmatrix}n\\1
      \end{smallmatrix}\right]_q$.}
\end{table}

\begin{table}[H]
\begin{tabular}{|c|c|c|c|c|}
\hline
&&$\ell\nmid\qbinom{n-2}{1}$&\multicolumn{2}{|c|}{$\ell\mid\qbinom{n-2}{1}$}\\
\hline
\multirow{2}{*}{$\ell\nmid q+1$}&$\ell\nmid\qbinom{n-1}{1}$ &$\begin{matrix}S_2=D_2\\{\overline M}=\begin{matrix}\Fell\\D_1\\\Fell\end{matrix}\oplus D_2
\end{matrix}
$ 
&\multicolumn{2}{|c|}{N/A} \\
\cline{2-5} 
 &$\ell\mid\qbinom{n-1}{1}$&N/A &\multicolumn{2}{|c|}{N/A}\\
\hline
\hline
\multirow{3}{*}{$\ell\mid q+1$}&\multirow{2}{*}{$\ell\nmid\lfloor\frac{n-1}{2}\rfloor$}&
\multirow{2}{*}{N/A} & \multicolumn{2}{|c|}{$S_2=D_2+D_1$} \\
\cline{4-5}
 && &$\begin{matrix}\ell\nmid\qbinom{n}{2}\\
\xymatrix@C=8pt@R=8pt{
&&&D_1\ar@{-}[dl]\ar@{-}[dr]&\\
{\overline M}=&\Fell\oplus&D_2\ar@{-}[dr]&&\Fell\ar@{-}[dl]\\
&&&D_1&}
 \end{matrix}
 $
&
$\begin{matrix}\ell\mid\qbinom{n}{2}\\
\xymatrix@C=8pt@R=8pt{
&&D_1\ar@{-}[dl]\ar@{-}[d]&\\
{\overline M}=&\Fell&D_2\ar@{-}[d]&\Fell\ar@{-}[dl]\\
&&D_1&}
\end{matrix}
$\\
\cline{2-5}
 &$\ell\mid\lfloor\frac{n-1}{2}\rfloor$&N/A & \multicolumn{2}{|c|}{$\begin{matrix}S_2=D_2+D_1+\Fell\\
{\overline M}=\Fell\oplus\begin{matrix}D_1\\\Fell\\D_2\\\Fell\\D_1\end{matrix}
\end{matrix}
$
}\\
\hline
\hline
\end{tabular}
\caption*{Table B. $\ell\mid\left[\begin{smallmatrix}n\\1
      \end{smallmatrix}\right]_q$.}
\end{table}

\end{document}